\def\aa{{\ensuremath\boldsymbol a}}
\def\bb{{\ensuremath\boldsymbol b}}
\def\gg{{\ensuremath\boldsymbol g}}
\def\x{{\ensuremath\boldsymbol x}}
\def\w{{\ensuremath\boldsymbol w}}
\def\t{{\ensuremath\boldsymbol t}}
\def\domn{\mathcal D}
\def\Rnp{\mathbb R_+^n}
\def\epig{\textrm{epi}}
\def\hypg{\textrm{hyp}}
\def\conv{\textrm{conv}}
\newtheorem{defn}{Definition}
\newtheorem{lemma}{Lemma}
\newtheorem{prop}{Proposition}
\newtheorem{corollary}{Corollary}
\newtheorem{thm}{Theorem}
\title{Convex envelopes of bounded monomials on two-variable cones}
\author{Pietro Belotti}
\thanks{Dipartimento di Elettronica, Informazione e Bioingegneria, Politecnico di Milano\\
Orcid: \href{https://orcid.org/0000-0001-6591-6886}{0000-0001-6591-6886}\\
Email: \href{mailto:pietro.belotti@polimi.it}{pietro.belotti@polimi.it}}
\date{1 March 2023}
\begin{document}

\maketitle

\begin{abstract}
We consider an $n$-variate monomial function that is restricted both
in value by lower and upper bounds and in domain by two homogeneous
linear inequalities. Such functions are building blocks of several
problems found in practical applications, and that fall under the
class of Mixed Integer Nonlinear Optimization.  We show that the upper
envelope of the function in the given domain, for $n\ge 2$ is given by
a conic inequality. We also present the lower envelope for $n=2$. To
assess the applicability of branching rules based on homogeneous
linear inequalities, we also derive the volume of the convex hull for
$n=2$.
\end{abstract}

%%%%%%%%%%%%%%%%%%%%%%%%%%%%%%%%%%%%%%%%%%%%%%%%%%%%%%%%%%%%%%%%%
%%%%%%%%%%%%%%%%%%%%%%%%%%%%%%%%%%%%%%%%%%%%%%%%%%%%%%%%%%%%%%%%%
%%%%%%%%%%%%%%%%%%%%%%%%%%%%%%%%%%%%%%%%%%%%%%%%%%%%%%%%%%%%%%%%%
\section{Introduction}
\label{sec:intro}

%\marginpar{Add volume of branched McCormick tetrahedron and invariance w.r.t. branching variable}
Consider the function $f:\Rnp\to \mathbb R_+$ defined as $f(\x) =
\prod_{i\in N} x_i^{a_i}$ where $N = \{1,2,\ldots{},n\}$, $n>1$, $a_i
>0 \forall i\in N$. We are interested in the convex envelope of $f$ on
subsets of $\Rnp$ where the value of $f$ itself is bounded in the
range $[\ell,u]$, with $0 < \ell < u < +\infty$. Specifically, for a
given $\domn\subseteq \mathbb R_+^n$ we seek the convex hull of the set
$F(\domn) = \{(\x,z)\in (X \cap \domn) \times \mathbb R: z = f(\x)\}$,
where
\[
  X = \{
    \x \in \Rnp: \ell \le f(\x) \le u
  \}.
\]
Finding or approximating the convex hull of $F(\domn)$ is useful in
solving optimization problems whose objective function or constraints
contain polynomials with arbitrary real exponents. In particular,
those monomials with variables restricted to the first orthant are of
interest in the optimization with {\em posynomial} functions, or, more
in general, in geometric programming
\cite{boyd2007tutorial,ecker1980geometric}. Some algorithms for
posynomial optimization and for providing relaxations and lower bounds
for monomials on positive variables have been proposed
\citep{lu2010convex,tsai2011efficient}.

Monomials also appear in optimization problems with general nonlinear
constraints and discrete variables. Mixed-Integer Nonlinear
Optimization (MINLO) solvers such as Antigone
\citep{misener2014antigone}, Baron \citep{sah1996}, Couen\-ne
\citep{belotticouman09}, SCIP \citep{bestuzheva2021scip}, and
FICO-Xpress \citep{ficoxpress} employ a
{\em reformulation} scheme where expressions in factorable
programs are decomposed to smaller expressions that can be
targeted by an operator-specific convexification algorithm
\citep{s_and_costas,tawarmalani.sahinidis:02}. This allows for
exploiting branch-and-bound algorithms to compute the global
optimum of a MINLO problem \citep{horst.tuy:93}.
For instance, the generic polynomial constraint
\begin{equation}
  \label{eq:poly}
  \sum_{j\in J} c_j \prod_{i\in I_j} x_i^{a_{ji}} \le c_0,
\end{equation}
where $c_j\in \mathbb R$, $a_{ji} \in \mathbb R$, and $c_0\in \mathbb
R$, is in general nonconvex, and is decomposed by introducing {\em
  auxiliary} variables $y_j$ and $t_{ji}$ as follows:
\begin{eqnarray}
  \label{reform:lin}  &\sum_{j\in J} c_j y_j \le c_0\\
  \label{reform:prod} &y_j = \prod_{i\in I_j} t_{ji} & \forall j\in J\\
  \label{reform:pow}  &t_{ji} = x_i^{a_{ji}}     & \forall i\in I_j, \forall j\in J.
\end{eqnarray}
Exact solvers like those mentioned above find or approximate the
convex hull of the sets defined by the constraints associated with each
of the auxiliary variables. A convex relaxation of the feasible set
for \eqref{reform:lin}-\eqref{reform:pow}, which yields a valid lower
bound for the MINLO problem, is
\[
\textstyle R = R' \cap \left(\bigcap_{j\in J} R''_j \right) \cap \left(\bigcap_{j\in J, i\in I_j} R'''_{ji}\right),
\]
where
\[
  \begin{array}{lll}
  R'       & =         &                    \{(\x, \w, \t): \sum_{j\in J} c_j y_j \le b \},\\
  R''_j    & \supseteq & \textrm{conv}\left(\left\{(\x, \w, \t): y_j = \prod_{i\in I_j} t_{ji} \right\}\right),\\[2mm]
  R'''_{ji} & \supseteq & \textrm{conv}\left(\left\{(\x, \w, \t): t_{ji} = x_i^{a_{ji}}\right\}\right).
  \end{array}
\]
Clearly $R'$ is defined by a linear constraint and is added to the
reformulation as is. The reformulation results in a lower bound
on the optimal objective value that is tighter when $R''_j$ and $R'''_{ji}$ are tight
approximations of the corresponding convex hulls.
Note that even in the case $R''$ and $R'''$ are equal to, rather than
supersets of, the corresponding convex hulls, $R$ is still in general not the
convex hull of the set described by  \eqref{reform:lin}-\eqref{reform:pow}.
The tightness of
such relaxations strongly depends on tight bounds on both the original
variables $x_i$ and the auxiliaries $y_j$ and $t_{ji}$. Several {\em
  bound reduction} techniques such as feasibility-based (see
e.g.\mbox{} \citet{neu1990}) and optimality-based
\citep{tawarmalani.sahinidis:02} help find tight bounds on all
auxiliary variables.  Before generating an LP relaxation for
\eqref{eq:poly}, most solvers apply bound reduction to find a tight
bound interval $[\ell_j,u_j]$ on $y_j$ and $[\ell_{ji},u_{ji}]$ on
$t_{ji}$. This motivates the search for convex envelopes of $f$ over
$X \cap \domn$.

The equality signs in \eqref{reform:prod} and \eqref{reform:pow} can
be relaxed depending on the sign of $c_j$, which may  render
\eqref{reform:pow} convex depending on $a_{ji}$: if $a_{ji}\ge
1$, then the constraint $t_{ji} \ge x_i^{a_{ji}}$ is convex, and
viceversa if $a_{ji} \le 1$ then $t_{ji} \le x_i^{a_{ji}}$ is
convex.

However, \eqref{reform:prod} is nonconvex regardless of the
sign, although it admits a polyhedral convex hull
\citep{rik1997}. \citet{mccormick76} provides a valid relaxation for
the product of two variables,
\begin{equation}
B = \{(x_1, x_2, z)\in \mathbb R^3_+: z = x_1 x_2, (x_1, x_2) \in [\ell_1, u_1] \times [\ell_2, u_2]\},
\end{equation}
which consists of the four so-called {\em McCormick inequalities}:
\begin{eqnarray}
  \label{eq:mccormick1} z & \ge & \ell_2 x_1 + \ell_1 x_2 - \ell_2 \ell_1\\
                        z & \ge &    u_2 x_1 +    u_1 x_2 -    u_2    u_1\\
                        z & \le & \ell_2 x_1 +    u_1 x_2 - \ell_2    u_1\\
  \label{eq:mccormick4} z & \le &    u_2 x_1 + \ell_1 x_2 -    u_2 \ell_1.
\end{eqnarray}
These inequalities, in fact, form the convex hull of $B$
\citep{alkhayyal.falk:83}. While these results hold without sign
constraints on $x_1$ and $x_2$, we use the above definition of $B$ as
we focus on sets entirely contained in $\Rnp$.
\citet{meyerfl2004} present inequalities of the
convex hull in the trilinear case ($n=3$).

%%%%%%%%%%%%%%%%%%%%%%%%%%%%%%%%%%%%%%%%%%%%%%%%%%%%%%%%%%%%%%%%%
%%%%%%%%%%%%%%%%%%%%%%%%%%%%%%%%%%%%%%%%%%%%%%%%%%%%%%%%%%%%%%%%%
\subsection{The case for bounded monomials}

Bound reduction that uses the bounds on the initial variables $\x$ and
the linear constraint \eqref{reform:lin} can obtain, as an
implication, lower and upper bound on each element of the sum in
\eqref{eq:poly}. In the bilinear case, i.e., $n=2$, $a_1 = a_2 = 1$,
the convex hull of
\[
B' = \{(x_1, x_2, z) \in B: \ell\le z \le u\}
\]
is tighter than
\eqref{eq:mccormick1}-\eqref{eq:mccormick4} if the bounds on $z$ are
tighter than those on $x_1$ and $x_2$, i.e., $\ell > \ell_1 \ell_2$ or $u
< u_1 u_2$.
%% ---if we consider
%%   $(x_1,x_2)\in \mathbb R^2$ rather than $\mathbb R_+^2$, this
%%   condition generalizes to
%% %
%% $
%% \ell > \min\{\ell_1 \ell_2, \ell_1 u_2, u_1 \ell_2, u_1 u_2\}\vee
%% u    < \max\{\ell_1 \ell_2, \ell_1 u_2, u_1 \ell_2, u_1 u_2\}
%% $

\citet{belotti2010805} introduce inequalities that are not implied by
the McCormick inequalities if at least one of the bounds on $z$ is tighter,
and that result in a tighter relaxation
when solving quadratically constrained MINLO problems.
\citet{anstreicher2021convex} show that the convex hull of $B'$ for
``tight'' values of $\ell$ or $u$ is the union of distinct regions, each
partially delimited by a different second-order cone,
i.e., a set of the form $\bb^\top \x + b_0 \ge \|G\x + \gg\|_2$.
\citet{nguyen2018deriving} obtain envelopes of monomials
$x_1^{a_1} x_2^{a_2}$ for the cases $a_1=1\le a_2$ (convex hull and
upper envelope) and $a_1, a_2 \ge 1$ (lower envelope).

This article mainly focuses on the convex hull of $F(\domn)$ where
\begin{equation}
  \label{eq:def_wij}
  \domn = W_{ij} := \{\x \in \Rnp: p x_i \le x_j \le q x_i\},
\end{equation}
with $0 < p < q$, for two indices $i,j\in N$. Therefore we depart from
the most usual setting where lower and upper bounds on $\x$ are given,
and instead constrain the $\x$ variables with two homogeneous
(i.e.\mbox{} zero right-hand side) linear inequalities, which restrict
the $\x$ space to a {\em wedge}.

Using linear inequalities instead of a bounding box as the feasible
set for $\x$ deviates from the common setting exploited by
MINLO solvers.
An advantage of this approach is that we may obtain a tighter
relaxation by avoiding the decomposition of the factorable term
$\prod_{i\in N} x_i^{a_i}$ into $y=\prod_{i\in N} t_i$ and $t_i =
x_i^{a_i}$.

While we assume $a_i>0\forall i\in N$, note that we can reduce
monomials with one or more terms $x_i^{a_i}$ with $a_i<0$ to this case
by another reformulation step: introduce an auxiliary variable $y_i$
such that $y_i = \frac{1}{x_i}$ and replace $x_i^{a_i}$ with
$y_i^{-a_i}$. This requires $x_i>0 \forall \x\in X \cap \domn$, which
is implied by $\ell > 0$, and it introduces an extra gap due to the
additional reformulation step $\{(x_i,y_i) \in \mathbb R_+^2: x_i y_i
= 1\}$, whose convex hull is $\{(x_i,y_i) \in \mathbb R_+^2: x_i y_i
\ge 1\}$ if $x_i \in [0,+\infty]$, otherwise it is $\{(x_i,y_i) \in
\mathbb R_+^2: x_i y_i \ge 1, x_i + u \ell y_i \le \ell + u\}$.

\begin{defn}
  Given $T\subseteq \mathbb R^n$ and a function $f: T \to \mathbb R$,
  the {\em epigraph} of $f$ in $T$ is $\epig(f,T) = \{(\x,z)\in
  T\times \mathbb R: z \ge f(\x)\}$. The {\em hypograph} of $f$ in $T$
  is $\hypg(f,T) = \{(\x,z)\in T\times \mathbb R: z \le
  f(\x)\}$.
\end{defn}

\begin{defn}
  Given $T\subseteq \mathbb R^n$ and a function $f: T\to \mathbb R$,
  the {\em lower envelope} $E_L(f,T)$ (resp. {\em upper envelope}
  $E_U(f,T)$) of $f$ over $T$ is the convex hull of the epigraph
  (resp.\mbox{} hypograph) of $f$ in $T$.
\end{defn}

In Section \ref{sec:hulls} we provide some trivial results on the
convex hull of $F(\Rnp)$. In Section \ref{sec:upper_env} we present
the upper envelope of $f$ over $X \cap W_{ij}$ for any $n\ge 2$. In
Section \ref{sec:lower_env} we present the lower envelope of $f$ over
$X \cap W_{ij}$ for $n=2$, yielding the convex hull of $F(W_{12})$.
In Section \ref{sec:volume} we compute the volume of the convex hull
of $F(W_{12})$ for $n=2$; concluding remarks are in Section
\ref{sec:conclusions}.

%%%%%%%%%%%%%%%%%%%%%%%%%%%%%%%%%%%%%%%%%%%%%%%%%%%%%%%%%%%%%%%%%
%%%%%%%%%%%%%%%%%%%%%%%%%%%%%%%%%%%%%%%%%%%%%%%%%%%%%%%%%%%%%%%%%
%%%%%%%%%%%%%%%%%%%%%%%%%%%%%%%%%%%%%%%%%%%%%%%%%%%%%%%%%%%%%%%%%
\section{Convex hull of $F(\Rnp)$}
\label{sec:hulls}

Define $\beta = \sum_{i\in N} a_i$, then for $z_0,\gamma\in
\mathbb R$ consider the cone
\[
\textstyle \mathcal K = \left\{(\x,z) \in \Rnp \times \mathbb R:
  (z - z_0)^\beta \le \gamma \prod_{i\in N} x_i^{a_i}\right\}.
\]
The vertex of $\mathcal K$ is $({\mathbf 0}, z_0)$.  Also define
$F(\domn)^\le := \{(\x,z) \in (X\cap \domn) \times \mathbb R: z \le
f(\x)\}$; note that it is obviously not the hypograph of $f$ but
rather a relaxation of the link between $z$ and $f$. If $\beta = 1$
then $F(\domn)^\le$ is a convex cone intersected with $S :=
\{(\x,z)\in \Rnp \times \mathbb R: \ell\le z \le u\} = \Rnp \times
  [\ell,u]$. Similar to the bilinear case, where $n=2$ and $a_1 = a_2 =
  1$, we look for conditions under which $\mathcal K$ defines a tight
  relaxation of $F(\domn)$.
Let us find $z_0$ and $\gamma$ such that
\[
  \begin{array}{ll}
    \{(\x,z) \in F(\domn)^\le: z = \ell\} = \{(\x,z) \in \mathcal K: z = \ell\};\\
    \{(\x,z) \in F(\domn)^\le: z = u   \} = \{(\x,z) \in \mathcal K: z = u\}.
  \end{array}
\]
This is equivalent to finding $z_0, \gamma$ such that
\begin{equation}
  \label{eq:equalintersections}
  \begin{array}{ll}
  \ell \le \prod_{i\in N} x_i^{a_i} \Leftrightarrow \frac{1}{\gamma}(\ell-z_0)^\beta \le \prod_{i\in N} x_i^{a_i} & \forall \x \in \Rnp;\\[1mm]
  u \le \prod_{i\in N} x_i^{a_i} \Leftrightarrow \frac{1}{\gamma}(u-z_0)^\beta \le \prod_{i\in N} x_i^{a_i} & \forall \x \in \Rnp\\
  \end{array}
\end{equation}
and hence
$
    \ell = \frac{1}{\gamma}(\ell - z_0)^\beta,
    u = \frac{1}{\gamma}(u - z_0)^\beta
$,
which implies $z_0 \le \ell$.  Because $u>0$ and $u-z_0 > 0$, we
divide the first equation by the second one and obtain $
\left(\frac{\ell}{u}\right)^{1/\beta}= \frac{(\ell-z_0)}{(u - z_0)} $.
This yields
\begin{equation}
  \label{eq:defzgamma}
  z_0 = \frac{u^{\frac{1}{\beta}} \ell - \ell^{\frac{1}{\beta}} u}{u^{\frac{1}{\beta}} - \ell^{\frac{1}{\beta}}}, \quad \gamma = \left(\frac{u-\ell}{u^{\frac{1}{\beta}} - \ell^{\frac{1}{\beta}}}\right)^\beta.
\end{equation}
Note that $ \beta \ge 1 \Leftrightarrow (z_0 \le 0, \gamma \ge 1)$.
Also, if $\beta = 1$, $z_0 = 0$,
$\gamma = 1$, and it is easy to verify that $\mathcal K \cap S \equiv
F(\Rnp)^\le$, while $\ell=0$ implies $z_0 = 0$, i.e., the vertex of
$\mathcal K$ is the origin.

%The following two lemmas are rather trivial results that are however
%necessary later in the article.

%%%%%%%%%%%%%%%%%%%%%%%%%%%%%%%%%%%%%%%%%%%%%%%%%%%%%%%%%%%%%%%%%%%%%%%%%%%
\begin{lemma}
  \label{lemma:convhyperb}
  For $a_i > 0\forall i\in N$, $\ell \ge 0$, $T = \{\x \in \Rnp: \prod_{i\in
    N} x_i^{a_i} \ge \ell\}$ is convex.
\end{lemma}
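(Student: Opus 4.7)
The plan is to reduce the lemma to the standard fact that the weighted log-sum is concave on the positive orthant, so that $T$ can be expressed as a superlevel set of a concave function.

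First I would dispose of the trivial boundary case $\ell = 0$: since every $\x\in\Rnp$ satisfies $\prod_{i\in N} x_i^{a_i} \ge 0$, we have $T = \Rnp$, which is convex.

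For the main case $\ell > 0$, I would observe that $T$ cannot contain any point on the boundary of $\Rnp$: indeed, if some coordinate $x_j = 0$ then $\prod_{i\in N} x_i^{a_i} = 0 < \ell$, so $T \subseteq \mathbb R_{++}^n$. On this open domain the logarithm is monotone, so the defining inequality $\prod_{i\in N} x_i^{a_i} \ge \ell$ is equivalent to
\[
h(\x) := \sum_{i\in N} a_i \log x_i \ge \log \ell.
\]
Since each $\log x_i$ is concave on $\mathbb R_{++}$ and $a_i > 0$, the function $h$ is a nonnegative linear combination of concave functions, hence concave on $\mathbb R_{++}^n$. Therefore $T = \{\x\in\mathbb R_{++}^n : h(\x) \ge \log \ell\}$ is a superlevel set of a concave function, and is convex.

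There is no real obstacle here; the only subtleties are the separate treatment of $\ell = 0$ and the observation that for $\ell > 0$ the set $T$ stays strictly inside the positive orthant, which is what allows the logarithm reformulation to be valid globally on $T$.
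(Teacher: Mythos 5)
Your proof is correct and follows essentially the same route as the paper: both arguments reduce to the concavity of $\sum_{i\in N} a_i\log x_i$ on the positive orthant, with the paper verifying the convex-combination inequality directly while you invoke the equivalent fact that superlevel sets of concave functions are convex.
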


\begin{proof}
  If $\ell = 0$, $T = \Rnp$. Assume now $\ell > 0$; then $x_i > 0,
  i\in N$, for all $\x\in T$. Consider $\x', \x'' \in T$. For any $\mu
  \in [0,1]$, we prove $\prod_{i\in N} (\mu x'_i + (1-\mu)
  x''_i)^{a_i} \ge \ell$, i.e.,
  \[
  \sum_{i\in N} a_i\log\left(\mu x'_i + (1-\mu) x''_i\right) \ge \log \ell.
  \]
  Concavity of the $\log$ function and positivity of all $a_i$'s imply
  \[
  \begin{array}{ll}
       & \sum_{i\in N} a_i\log\left(\mu x'_i + (1-\mu) x''_i\right) \ge \\
   \ge & \sum_{i\in N} a_i(\mu \log x'_i + (1-\mu) \log x''_i) = \\
   =   & \mu \sum_{i\in N} a_i \log x'_i + (1-\mu) \sum_{i\in N} a_i \log x''_i \ge \\
   \ge & \mu \log \ell + (1-\mu)  \log \ell = \log \ell,
  \end{array}
  \]
  which proves convexity of $T$.
\end{proof}
%%%%%%%%%%%%%%%%%%%%%%%%%%%%%%%%%%%%%%%%%%%%%%%%%%%%%%%%%%%%%%%%%%%%%%%%%%%%%

%%%%%%%%%%%%%%%%%%%%%%%%%%%%%%%%%%%%%%%%%%%%%%%%%%%%%%%%%%%%%%%%%%%%%%%%%%%%%
\begin{lemma}
  \label{lemma:concratio}
  Let $g:\mathbb R\to \mathbb R$ be a convex (resp.\mbox{} concave)
  monotone non-decreasing function. Then
  for any $\ell, z, u$ such that $\ell < z < u$,
  \[
  \frac{g(z) - g(\ell)}{z - \ell} \le \frac{g(u) - g(z)}{u - z} \qquad
  \left(\textrm{resp. }
  \frac{g(z) - g(\ell)}{z - \ell} \ge \frac{g(u) - g(z)}{u - z}\right).
  \]
\end{lemma}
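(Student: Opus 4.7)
The statement is the classical three-slope (or secant-slope) inequality for convex functions: the slope of the chord from $(\ell,g(\ell))$ to $(z,g(z))$ is bounded above by the slope of the chord from $(z,g(z))$ to $(u,g(u))$ when $g$ is convex, and the reverse when $g$ is concave. My plan is to reduce both ratios to the slope of the outer chord from $\ell$ to $u$ via the defining convex-combination inequality.

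Concretely, I would first set
\[
  \lambda = \frac{u-z}{u-\ell} \in (0,1),\qquad 1-\lambda = \frac{z-\ell}{u-\ell},
\]
so that $z = \lambda \ell + (1-\lambda) u$. In the convex case, the convexity of $g$ yields $g(z) \le \lambda g(\ell) + (1-\lambda) g(u)$, which I would rearrange in two ways: subtracting $g(\ell)$ gives $g(z)-g(\ell) \le (1-\lambda)\bigl(g(u)-g(\ell)\bigr)$, and subtracting from $g(u)$ gives $g(u)-g(z) \ge \lambda\bigl(g(u)-g(\ell)\bigr)$. Dividing the first by $z-\ell = (1-\lambda)(u-\ell)$ and the second by $u-z = \lambda (u-\ell)$, both secant slopes are compared to the common quantity $\bigl(g(u)-g(\ell)\bigr)/(u-\ell)$, which chains the desired inequality.

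For the concave case I would simply apply the convex version to $-g$, which is convex and monotone non-increasing; since all three denominators are positive the inequality flips as claimed. There is essentially no obstacle here: the only thing worth remarking is that the hypothesis of monotonicity is not actually used in this argument, so the lemma as stated is a direct corollary of the pure convexity (resp.\ concavity) three-slope inequality; the non-decreasing assumption is presumably included because later applications will exploit it together with the slope bound.
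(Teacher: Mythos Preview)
Your proof is correct and follows essentially the same approach as the paper: both write $z$ as a convex combination of $\ell$ and $u$ and then rearrange the defining convexity/concavity inequality to compare the two secant slopes (the paper divides one rearranged inequality directly by the identity $(1-\mu)(z-\ell)=\mu(u-z)$, while you route both slopes through the outer chord, a cosmetic difference). Your remark that the monotonicity hypothesis is not actually used is also correct.
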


\begin{proof}
  Consider $\mu\in(0,1)$ such that $z = (1 - \mu) \ell + \mu u$. Then
  $0 < \mu < 1$ and for $g$ concave we obtain $g(z) \ge (1 - \mu)
  g(\ell) + \mu g(u)$, or
  \[
    \begin{array}{rcl}
      (1 - \mu) g(z) + \mu g(z)  & \ge &  (1 - \mu) g(\ell) + \mu g(u)\\
      (1 - \mu)   z  + \mu   z   &  =  &  (1 - \mu)   \ell  + \mu   u \\[2mm]
      (1 - \mu) (g(z) - g(\ell)) & \ge &   \mu (g(u) - g(z))          \\
      (1 - \mu) (z - \ell)       &  =  &   \mu (u - z).               \\
    \end{array}
  \]
  Dividing the third inequality by the last equation yields
  $\frac{g(z) - g(\ell)}{z - \ell} \ge \frac{g(u) - g(z)}{u -
    z}$. Similar considerations hold, {\em mutatis mutandis}, for $g$ convex.
\end{proof}
%%%%%%%%%%%%%%%%%%%%%%%%%%%%%%%%%%%%%%%%%%%%%%%%%%%%%%%%%%%%%%%%%%%%%%%%%%%%%%%%

%%%%%%%%%%%%%%%%%%%%%%%%%%%%%%%%%%%%%%%%%%%%%%%%%%%%%%%%%%%%%%%%%%%%%%%%%%%%%%%%
\begin{lemma}
  \label{lemma:containment}
  $F(\Rnp)\subseteq \mathcal K$ if and only if $\beta \ge 1$.
\end{lemma}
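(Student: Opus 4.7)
The plan is to reduce the containment $F(\Rnp)\subseteq\mathcal K$ to a purely one-variable inequality. The key observation is that every $(\x,z)\in F(\Rnp)$ satisfies $z=\prod_{i\in N} x_i^{a_i}$, so the defining inequality $(z-z_0)^\beta \le \gamma\prod_{i\in N} x_i^{a_i}$ of $\mathcal K$ collapses to $(z-z_0)^\beta \le \gamma z$. Since any $z\in[\ell,u]$ is actually attained by some $\x\in\Rnp$ (for instance by taking $x_i$ equal to a common value $z^{1/\beta}$, which makes $\prod_i x_i^{a_i}=z$), the containment $F(\Rnp)\subseteq\mathcal K$ is equivalent to the one-dimensional statement $h(z):=\gamma z - (z-z_0)^\beta \ge 0$ for every $z\in[\ell,u]$.

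I would then exploit the two boundary conditions built into the choice \eqref{eq:defzgamma}: by construction $h(\ell)=0$ and $h(u)=0$. Since $z_0\le \ell$, the power $(z-z_0)^\beta$ is a smooth, well-defined function on $[\ell,u]$, and differentiating twice gives $h''(z) = -\beta(\beta-1)(z-z_0)^{\beta-2}$. Thus $h$ is concave on $[\ell,u]$ if $\beta\ge 1$ and strictly convex there if $\beta<1$.

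For the forward implication, if $\beta\ge 1$ then $h$ is concave on $[\ell,u]$ with $h(\ell)=h(u)=0$, and a concave function that vanishes at both endpoints of an interval is nonnegative on the whole interval. This yields $h(z)\ge 0$ on $[\ell,u]$, hence $F(\Rnp)\subseteq\mathcal K$. For the converse, if $\beta<1$ then $h$ is strictly convex on $[\ell,u]$ with $h(\ell)=h(u)=0$, so $h(z)<0$ for every $z\in(\ell,u)$; picking any such $z$ together with $\x$ constructed as above gives a point of $F(\Rnp)$ outside $\mathcal K$.

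There is no real obstacle: the only subtlety is the initial reduction to one variable, which turns an ostensibly $n$-dimensional containment into a concavity/convexity check for a scalar function on $[\ell,u]$, and the sign analysis of $h''$ then finishes both directions simultaneously.
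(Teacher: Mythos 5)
Your proof is correct, but it takes a different route from the paper's. Both arguments begin with the same reduction: on $F(\Rnp)$ the defining inequality of $\mathcal K$ collapses to the scalar condition $(z-z_0)^\beta\le\gamma z$ for $z\in[\ell,u]$ (and you are right to note explicitly that every such $z$ is attained, which the paper leaves implicit but which is needed for the ``only if'' direction). From there the paper proceeds algebraically: it substitutes the expressions for $z_0$ and $\gamma$, clears denominators, takes $\beta$-th roots, and rearranges the result into the chord-slope inequality $\frac{z^{1/\beta}-\ell^{1/\beta}}{z-\ell}\ge\frac{u^{1/\beta}-z^{1/\beta}}{u-z}$, which its Lemma~\ref{lemma:concratio} shows holds for all $\ell<z<u$ exactly when $g(x)=x^{1/\beta}$ is concave, i.e.\ $\beta\ge 1$. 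You instead analyze $h(z)=\gamma z-(z-z_0)^\beta$ directly: the boundary conditions $h(\ell)=h(u)=0$ are built into the choice \eqref{eq:defzgamma}, and the sign of $h''(z)=-\beta(\beta-1)(z-z_0)^{\beta-2}$ settles both directions at once (this is legitimate because $\ell-z_0=\frac{\ell^{1/\beta}(u-\ell)}{u^{1/\beta}-\ell^{1/\beta}}>0$, so $z-z_0$ stays bounded away from zero on $[\ell,u]$ and the derivative computation is valid there). Your calculus argument is shorter and self-contained, avoiding the algebraic manipulation and the auxiliary chord-slope lemma entirely; the paper's derivative-free version buys reusability of Lemma~\ref{lemma:concratio} and keeps everything at the level of convexity definitions. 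Both establish the equivalence correctly.
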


\begin{proof}
  $F(\Rnp)\subseteq \mathcal K$ if and only if $(\ell\le z\le u) \wedge (z =
  \prod_{i\in N} x_i^{a_i}) \Rightarrow (z-z_0)^\beta \le \gamma
  \prod_{i\in N} x_i^{a_i}$. Rewrite the left-hand side as $ (z-
  z_0)^\beta\le \gamma z$. Replacing $z_0$ and $\gamma$ we get
  \[
  \left(z-\frac{u^{1/\beta} \ell - \ell^{1/\beta} u}{u^{1/\beta} - \ell^{1/\beta}}\right)^\beta \le \left(\frac{u-\ell}{u^{1/\beta} - \ell^{1/\beta}}\right)^\beta z,
  \]
  from which we eliminate the (positive) common denominator:
  \[
  \begin{array}{lcll}
      & \textstyle \left(z u^{\frac{1}{\beta}} - z \ell^{\frac{1}{\beta}} -u^{\frac{1}{\beta}}\ell + \ell^{\frac{1}{\beta}}u\right)^\beta &= \\%\le (u-\ell)^{\frac{1}{\beta}} z \Leftrightarrow\\
    = & \textstyle\left(u^{\frac{1}{\beta}}(z-\ell)+\ell^{\frac{1}{\beta}}(u-z)\right)^\beta &\le& \textstyle(u-\ell)^\beta z.
  \end{array}
  \]
  Because $\beta > 0$ and all terms in both left- and right-hand side
  are nonnegative, the above is equivalent to
  \begin{equation}
    \label{eq:almostconvcomb}
    u^{\frac{1}{\beta}}(z-\ell)+\ell^{\frac{1}{\beta}}(u-z)\le (u-\ell)z^{\frac{1}{\beta}}.
  \end{equation}
Rewrite the right-hand side as $(u-z)z^{\frac{1}{\beta}} +
(z-\ell)z^{\frac{1}{\beta}}$ and \eqref{eq:almostconvcomb} as
$(u-z)(z^{\frac{1}{\beta}} - \ell^{\frac{1}{\beta}}) \ge
(z-\ell)(u^{\frac{1}{\beta}} - z^{\frac{1}{\beta}})$, or
\[
  \frac{z^{\frac{1}{\beta}} - \ell^{\frac{1}{\beta}}}{z-\ell} \ge
  \frac{u^{\frac{1}{\beta}} - z^{\frac{1}{\beta}}}{u-z},
\]
which, for Lemma \ref{lemma:concratio}, is true if and only if $\beta
\ge 1$ for concavity of $g(x) = x^{\frac{1}{\beta}}$.
\end{proof}

The structure of both
upper envelope and lower envelope of $f$ over $X \cap \domn$ discussed
in this and the following section changes radically at $\beta=1$. For
instance, $F(\Rnp)^\le$ is convex for $\beta \le 1$ and nonconvex for
$\beta > 1$. Therefore we split the treatment within each section
depending on the ranges of $\beta$.

\begin{prop}
  \label{prop:upper}
  If $\beta \ge 1$, then $\textrm{conv}(F(\Rnp)) = \mathcal K \cap S$.
\end{prop}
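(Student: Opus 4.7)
The plan is to prove the two inclusions $\textrm{conv}(F(\Rnp)) \subseteq \mathcal{K} \cap S$ and $\mathcal{K} \cap S \subseteq \textrm{conv}(F(\Rnp))$ separately. The first follows from $F(\Rnp) \subseteq \mathcal{K}$ (Lemma \ref{lemma:containment}) and $F(\Rnp) \subseteq S$ (by definition of $F$), together with convexity of $\mathcal{K} \cap S$. I would verify convexity by restricting to $z \in [\ell,u]$, where $z \ge \ell > 0 \ge z_0$ thanks to the remark that $\beta \ge 1$ forces $z_0 \le 0$, so that the defining inequality of $\mathcal{K}$ rewrites equivalently as $z \le z_0 + \gamma^{1/\beta} f(\x)^{1/\beta}$. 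Since $\sum_{i\in N} a_i/\beta = 1$, the function $f(\x)^{1/\beta} = \prod_i x_i^{a_i/\beta}$ is a weighted geometric mean and hence concave on $\Rnp$, so the hypograph intersected with $S$ is convex.

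For the reverse inclusion I would produce, for each $(\x,z) \in \mathcal{K} \cap S$, an explicit two-point convex decomposition. Let $\lambda = (u-z)/(u-\ell)$, so $z = \lambda \ell + (1-\lambda) u$. Using $\ell - z_0 = \gamma^{1/\beta} \ell^{1/\beta}$ and $u - z_0 = \gamma^{1/\beta} u^{1/\beta}$, which drop out of \eqref{eq:defzgamma}, the defining inequality of $\mathcal{K}$ rearranges to $f(\x)^{1/\beta} \ge \lambda \ell^{1/\beta} + (1-\lambda) u^{1/\beta}$, so the scalar $t := f(\x)^{1/\beta} / (\lambda \ell^{1/\beta} + (1-\lambda) u^{1/\beta})$ is $\ge 1$. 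Setting $\x_1 = t (\ell/f(\x))^{1/\beta} \x$ and $\x_2 = t (u/f(\x))^{1/\beta} \x$, a routine calculation gives $\lambda \x_1 + (1-\lambda) \x_2 = \x$, $f(\x_1) = t^\beta \ell \ge \ell$, and $f(\x_2) = t^\beta u \ge u$, so formally $(\x,z) = \lambda(\x_1,\ell) + (1-\lambda)(\x_2,u)$.

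The main obstacle is that $f(\x_1)$ may strictly exceed $\ell$ (and $f(\x_2)$ may even exceed $u$), so $(\x_1,\ell)$ and $(\x_2,u)$ need not lie in $F(\Rnp)$ itself, and I must still argue they belong to its convex hull. The key auxiliary claim is that $\textrm{conv}(F(\Rnp)) \cap \{z = \ell\} = \{\x \in \Rnp : f(\x) \ge \ell\} \times \{\ell\}$, and analogously at $z=u$. The $\subseteq$ direction is Lemma \ref{lemma:convhyperb}. For $\supseteq$, given $\y$ with $f(\y) > \ell$, I would pick indices $i \ne j$ with $y_i, y_j > 0$ and observe that $t \mapsto f(\y + t(\mathbf{e}_i - \mathbf{e}_j))$ vanishes at $t = -y_i$ and $t = y_j$ while taking the value $f(\y) > \ell$ at $t = 0$; by continuity it equals $\ell$ at some $t^- \in (-y_i,0)$ and $t^+ \in (0,y_j)$, and $\y$ is then the convex combination of the two level-set points $\y + t^\pm(\mathbf{e}_i - \mathbf{e}_j)$ with weights $t^+/(t^+-t^-)$ and $-t^-/(t^+-t^-)$, completing the proof.
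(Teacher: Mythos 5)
Your proof is correct and follows essentially the same route as the paper: you scale $(\x,z)$ along the ray through the cone vertex $(\mathbf 0,z_0)$ to land on the slices $z=\ell$ and $z=u$, and your $\x_1,\x_2$ coincide with the paper's $\x',\x''$ since $t(\ell/f(\x))^{1/\beta}=(\ell-z_0)/(z-z_0)$. The only divergence is the last step: where the paper appeals somewhat loosely to ``extreme points'' of the convex superlevel set $\{\x:f(\x)\ge\ell\}$, your explicit intermediate-value construction along $\unit_i-\unit_j$ gives a fully rigorous justification of the same fact (and mirrors the device the paper itself uses in Case 2 of Proposition \ref{prop:upper_env}).
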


\begin{proof}
  $F(\Rnp) \subseteq \mathcal K$ from Lemma \ref{lemma:containment};
  this implies that $\textrm{conv}(F(\Rnp)) \subseteq \mathcal K$ as
  $\mathcal K$ is convex. In addition, $F(\Rnp)\subseteq S$ and
  convexity of $S$ imply $\textrm{conv}(F(\Rnp)) \subseteq \mathcal K
  \cap S$. We just need to prove $\textrm{conv}(F(\Rnp))\supseteq
  \mathcal K \cap S$.

  Consider $(\tilde \x, \tilde z)\in \mathcal K\cap S$, i.e., $(\tilde
  z-z_0)^\beta \le \gamma \prod_{i\in N} \tilde x_i^{a_i}$, $\ell\le
  \tilde z \le u$. Let us construct $\x'$ and $\x''$ such that
  $(\tilde \x, \tilde z - z_0)$ is a convex combination of $(\x', \ell
  - z_0)$ and $(\x'', u - z_0)$.  Let $(\x', \ell - z_0) = s' (\tilde
  \x, \tilde z - z_0)$ and $(\x'',u - z_0) = s''(\tilde \x, \tilde z -
  z_0)$, i.e., $(\x', \ell)$ and $(\x'', u)$ lie on the same ray of
  $\mathcal K$ as $(\tilde \x, \tilde z)$. By construction of $s'$ and
  $s''$,
  \[
    s'  = \frac{\ell - z_0}{\tilde z - z_0}\le 1; \qquad
    s'' = \frac{u    - z_0}{\tilde z - z_0}\ge 1.
  \]
  In addition, $(\x',\ell)\in \mathcal K$ and $(\x'',u)\in
  \mathcal K$ because $(\tilde \x, \tilde z)\in \mathcal K$.
  Specifically, they belong to $\{(\x,z) \in \mathcal K:
  z = \ell\}$ and $\{(\x,z) \in \mathcal K: z = u\}$, respectively,
  but by \eqref{eq:equalintersections} this implies
 $(\x',\ell)\in F^\le_\ell := \{(\x,z) \in F(\Rnp)^\le: z = \ell\}$
 and
 $ (\x'',u)\in F^\le_u := \{(\x,z) \in F(\Rnp)^\le: z = u\}$.
 Because both $F^\le_\ell$ and $F^\le_u$ are convex by Lemma
 \ref{lemma:convhyperb}, there exist two extreme points $\x'_a$ and
 $\x'_b$ of $F(\Rnp)$ of which $\x'$ is a convex combination, such
 that $f(\x'_a)=f(\x'_b) = \ell$, and similar for $\x''$, thus
 implying $(\tilde \x, \tilde z)$ is a convex combination of
 points of $F(\Rnp)$.
\end{proof}

\begin{prop}
  If $\beta \le 1$, then $\conv(F(\Rnp)) = F(\Rnp)^\le$.
\end{prop}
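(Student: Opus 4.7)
The plan is a concavity-based argument exploiting that, for $\beta\le 1$, the monomial $f$ is concave on $\Rnp$. Writing $f=g^\beta$ with $g(\x)=\prod_{i\in N}x_i^{a_i/\beta}$ a weighted geometric mean whose exponents sum to one, $g$ is concave on $\Rnp$, and since $t\mapsto t^\beta$ is concave and non-decreasing on $[0,\infty)$ for $\beta\in(0,1]$, the composition $f=g^\beta$ is concave. With this in hand, the hypograph $\{(\x,z):z\le f(\x)\}$ is convex, so $F(\Rnp)^\le$, viewed as its intersection with $\Rnp\times[\ell,u]$, is convex. Since $F(\Rnp)\subseteq F(\Rnp)^\le$ trivially, the easy inclusion $\conv(F(\Rnp))\subseteq F(\Rnp)^\le$ follows.

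For the reverse inclusion, take $(\tilde\x,\tilde z)\in F(\Rnp)^\le$; if $f(\tilde\x)=\tilde z$ there is nothing to prove, so assume $f(\tilde\x)>\tilde z\ge\ell>0$, which forces every coordinate $\tilde x_i>0$. The idea is to express $\tilde\x$ as a convex combination of two points on the level set $L_{\tilde z}=\{\x\in\Rnp:f(\x)=\tilde z\}$; each such point, paired with $\tilde z\in[\ell,u]$, belongs to $F(\Rnp)$. Pick any two indices $i\ne j$ (available since $n\ge 2$) and let $\dd$ be the vector with entry $+1$ in position $i$, $-1$ in position $j$, and zeros elsewhere. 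The function $h(t)=f(\tilde\x+t\dd)$ is continuous and concave (composition of concave $f$ with an affine map), satisfies $h(0)=f(\tilde\x)>\tilde z$, and $h(\tilde x_j)=h(-\tilde x_i)=0<\tilde z$ because the $j$-th or $i$-th coordinate of $\tilde\x+t\dd$ then vanishes. The intermediate value theorem supplies $t_1\in(-\tilde x_i,0)$ and $t_2\in(0,\tilde x_j)$ with $h(t_1)=h(t_2)=\tilde z$, and the identity $\tilde\x=\tfrac{t_2}{t_2-t_1}(\tilde\x+t_1\dd)+\tfrac{-t_1}{t_2-t_1}(\tilde\x+t_2\dd)$ exhibits $(\tilde\x,\tilde z)$ as a convex combination of two points of $F(\Rnp)$.

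The main obstacle I anticipate is marshalling the concavity of $f$, since the material preceding the proposition only used convexity of the superlevel sets via Lemma~\ref{lemma:convhyperb}, not concavity of $f$ itself; once the composition argument above is accepted, the rest is a routine intermediate-value construction. Structurally this proof is markedly simpler than that of Proposition~\ref{prop:upper}: for $\beta\le 1$ the graph of $f$ already sits on the upper boundary of a convex hypograph, so the relaxation $z\le f(\x)$ is convex without introducing any conic envelope such as $\mathcal K$, and only the ``spreading'' of $\tilde\x$ to the level set $L_{\tilde z}$ needs to be carried out.
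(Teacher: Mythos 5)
Your proof is correct and follows essentially the same route as the paper: the forward inclusion via convexity of $F(\Rnp)^\le$ (for which you supply the concavity-of-$f$ argument that the paper leaves implicit), and the reverse inclusion by writing $\tilde \x$ as a convex combination of two points on the level set $\{f=\tilde z\}$, which you construct explicitly with the intermediate value theorem where the paper appeals to extreme points of the convex slice at height $\tilde z$. Both halves check out.
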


\begin{proof}
  $F(\Rnp)\subseteq F(\Rnp)^\le$ and $F(\Rnp)^\le$ is convex, therefore
  $\textrm{conv}(F(\Rnp)) \subseteq F(\Rnp)^\le$. To prove
  $\textrm{conv}(F(\Rnp)) \supseteq F(\Rnp)^\le$, consider $(\tilde
  \x, \tilde z)\in F(\Rnp)^\le$. If $f(\tilde \x)=\tilde z$, obviously
  the result holds. If $\tilde z < f(\tilde \x)$, similar to the proof
  of Proposition \ref{prop:upper}, convexity of $T = \{(\x, z) \in
  F(\Rnp)^\le: z = \tilde z\}$ implies there exist two extreme points
  of $T$, i.e., two elements of $\{(\x,z)\in F(\Rnp): z=\tilde z\}$,
  of which $(\tilde \x, \tilde z)$ is a convex combination.
\end{proof}

%%%%%%%%%%%%%%%%%%%%%%%%%%%%%%%%%%%%%%%%%%%%%%%%%%%%%%%%%%%%%%%%%%%%%%%%%%
%%%%%%%%%%%%%%%%%%%%%%%%%%%%%%%%%%%%%%%%%%%%%%%%%%%%%%%%%%%%%%%%%%%%%%%%%%
%%%%%%%%%%%%%%%%%%%%%%%%%%%%%%%%%%%%%%%%%%%%%%%%%%%%%%%%%%%%%%%%%%%%%%%%%%
\section{Upper envelope over $X \cap W_{ij}$}
\label{sec:upper_env}

We established that the convex hull of $F(\Rnp)$ is defined by
$\mathcal K \cap S$ if $\beta \ge 1$ and $F(\Rnp)^\le$ otherwise. From now
on, we consider $\domn=W_{ij}$ defined in \eqref{eq:def_wij}:
\[
  W_{ij} = \{\x \in \Rnp: p x_i \le x_j \le q x_i\}.
\]
Below we prove that the above result on the upper envelope is
substantially unchanged, save an extra inequality for $n=2$.
\begin{lemma}
  \label{lemma:n2finite}
  For any $0 < p \le q < +\infty$, the set $X \cap W_{ij}$ is bounded
  for $n=2$ and unbounded for $n>2$.
\end{lemma}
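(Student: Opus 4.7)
The two directions of the lemma are asymmetric: boundedness for $n=2$ follows from a direct algebraic combination of the wedge inequalities and the value bounds, while unboundedness for $n>2$ requires constructing an explicit divergent sequence that exploits the extra degrees of freedom available when more than two coordinates are present. I would therefore treat the two cases separately.

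For $n=2$, I would take $(i,j)=(1,2)$ without loss of generality, so that $W_{12}=\{\x\in\mathbb R_+^2: p x_1\le x_2\le q x_1\}$ and $f(\x)=x_1^{a_1} x_2^{a_2}$. Since $a_2>0$, raising the wedge inequality $p x_1\le x_2\le q x_1$ to the power $a_2$ and multiplying by $x_1^{a_1}$ yields the sandwich
\[
p^{a_2}\, x_1^{\beta} \;\le\; f(\x) \;\le\; q^{a_2}\, x_1^{\beta}.
\]
Combined with $\ell\le f(\x)\le u$, this pins $x_1$ into the compact interval $\bigl[(\ell/q^{a_2})^{1/\beta},\,(u/p^{a_2})^{1/\beta}\bigr]$, and then $x_2\le q x_1$ together with $x_2\ge p x_1>0$ bounds $x_2$ as well, so $X\cap W_{12}$ is contained in a compact rectangle.

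For $n\ge 3$, my plan is to exhibit an explicit one-parameter family of points in $X\cap W_{ij}$ whose norm diverges. Pick any third index $k\in N\setminus\{i,j\}$ and any ratio $\rho\in[p,q]$, and for a parameter $t>0$ set
\[
x_k = t,\qquad x_i = c\, t^{-a_k/(a_i+a_j)},\qquad x_j = \rho\, x_i,\qquad x_m = 1\ \text{for } m\notin\{i,j,k\},
\]
where the constant $c>0$ is chosen so that $c^{a_i+a_j}\rho^{a_j}\in[\ell,u]$ (which is possible since $\ell<u$). A direct substitution shows that the exponents of $t$ in $x_i^{a_i}x_j^{a_j}x_k^{a_k}$ cancel, giving $f(\x)=c^{a_i+a_j}\rho^{a_j}$ independently of $t$, so $\x\in X$; and $x_j/x_i=\rho\in[p,q]$ places $\x\in W_{ij}$. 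Sending $t\to\infty$ then drives $x_k\to\infty$ and hence $\|\x\|\to\infty$ within $X\cap W_{ij}$.

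The only subtlety is choosing the exponent $-a_k/(a_i+a_j)$ in the ansatz for $x_i$ so that the powers of $t$ cancel; once that choice is made, positivity of the $a_m$'s together with the wedge check and the value bounds are all immediate, so I do not expect any real obstacle in writing out either case.
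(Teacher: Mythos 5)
Your proof is correct and follows essentially the same route as the paper: for $n=2$ you sandwich $f(\x)$ between $p^{a_2}x_1^{\beta}$ and $q^{a_2}x_1^{\beta}$ to bound $x_1$ (and hence $x_2$) exactly as the paper does, and for $n>2$ your one-parameter family with $x_k=t$ and $x_i\propto t^{-a_k/(a_i+a_j)}$ is the paper's construction with $M=t^{a_k}$. No gaps.
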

\begin{proof}
  For $n=2$, $x_1^{a_1}x_2^{a_2}\le u$ and $x_2 \ge p x_1$ imply
  $p^{a_2} x_1^{a_1 + a_2} \le x_1^{a_1}x_2^{a_2} \le u$, i.e., $x_1
  \le \omega_1 := (u p^{-a_2})^{\frac{1}{a_1+a_2}}$. Similarly,
  $x_1^{a_1}x_2^{a_2}\le u$ and $x_2 \le q x_1$ imply $q^{-a_1}
  x_2^{a_1 + a_2} \le x_1^{a_1}x_2^{a_2} \le u$, i.e., $x_2 \le
  \omega_2 := (u q^{a_1})^{\frac{1}{a_1+a_2}}$. Therefore $X \cap
  W_{ij}$ is contained in the bounding box $[0,\omega_1] \times
  [0,\omega_2] \times [\ell,u]$. Note that $x_1$ and $x_2$ may have
  tighter lower bounds than 0, but this is out of the scope of this
  proof.

  For $n>2$, choose $h\in N\setminus \{i,j\}$ and $r \in [p,q]$, then
  let $\x\in \Rnp$ be such that
  \begin{itemize}
    \item $x_h = M^{\frac{1}{a_h}}$ where $M$ is an
      arbitrarily large number;
    \item $x_i = \left(\frac{\ell}{M r^{a_j}}\right)^{\frac{1}{a_i + a_j}}$;
    \item $x_j = r x_i$;
    \item $x_k = 1\,\forall k\notin \{i,j,h\}$.
  \end{itemize}
  Then $\x\in X \cap W_{ij}$ since $\prod_{k\in N} x_k^{a_k} =
  x_h^{a_h} x_i^{a_i} x_j^{a_j} = \ell$ and $p x_i \le r x_i = x_j \le
  q x_i$, for arbitrarily large $M$.
\end{proof}

\begin{prop}
  \label{prop:upper_env}
  If $\beta \ge 1$ and $n>2$, the upper envelope of $f$ over $X \cap W_{ij}$ is
  \begin{eqnarray}
    \nonumber &H =\!\!& \{(\x,z)\in \Rnp\times \mathbb R: \\
    \label{eqnpf:upper_cone} &&\textstyle(z-z_0)^\beta \le \gamma\prod_{k\in N} x_k^{a_k}\\
    \label{eqnpf:zub}        && z\le u\\
    \label{eqnpf:wedge}      && p x_i \le x_j \le q x_i\\
    \label{eqnpf:prodlb}     &&\textstyle \prod_{k\in N} x_k^{a_k} \ge \ell\}.
  \end{eqnarray}
\end{prop}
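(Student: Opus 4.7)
The plan is to prove the two containments $\conv(\hypg(f, X \cap W_{ij})) \subseteq H$ and $H \subseteq \conv(\hypg(f, X \cap W_{ij}))$.

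For the easy direction, $H$ is convex as an intersection of convex sets: the cone \eqref{eqnpf:upper_cone}, the halfspace \eqref{eqnpf:zub}, the polyhedral wedge \eqref{eqnpf:wedge}, and the set \eqref{eqnpf:prodlb} (convex by Lemma~\ref{lemma:convhyperb}). For any hypograph point $(\x, z)$ with $\x \in X \cap W_{ij}$ and $z \le f(\x)$: constraints \eqref{eqnpf:wedge} and \eqref{eqnpf:prodlb} hold since $\x \in X \cap W_{ij}$, constraint \eqref{eqnpf:zub} follows from $z \le f(\x) \le u$, and the cone inequality \eqref{eqnpf:upper_cone} is obtained by applying Lemma~\ref{lemma:containment} to $(\x, f(\x))$ and exploiting the monotonicity of $(z - z_0)^\beta$ in $z$ on $[z_0, \infty)$ (with the inequality trivially satisfied for $z \le z_0$ since $\gamma f(\x) \ge 0$).

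For the reverse direction I would adapt the ray construction of Proposition~\ref{prop:upper}. Given $(\tilde \x, \tilde z) \in H$ with $\tilde z > z_0$, set $\x' = s' \tilde \x$ and $\x'' = s'' \tilde \x$ where $s' = (\ell - z_0)/(\tilde z - z_0)$ and $s'' = (u - z_0)/(\tilde z - z_0)$. Since $W_{ij}$ is itself a cone, both $\x'$ and $\x''$ remain in $W_{ij}$; the cone inequality transfers along the ray to give $f(\x') \ge \ell$ and $f(\x'') \ge u$. The identity $(\tilde \x, \tilde z) = \lambda(\x', \ell) + (1 - \lambda)(\x'', u)$ with $\lambda = (u - \tilde z)/(u - \ell) \in [0,1]$ then reduces the task to showing $(\x', \ell)$ and $(\x'', u)$ lie in $\conv(\hypg)$. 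The degenerate case $\tilde z \le z_0$ is easier since $\tilde z \le 0 < \ell$, and $\tilde z$ can be paired directly with any decomposition of $\tilde \x$ into points on $\{f = \ell\} \cap W_{ij}$.

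The final step decomposes $\x'$ into a convex combination of points $\x^{(k)} \in W_{ij}$ satisfying $f(\x^{(k)}) = \ell$, which yields hypograph points $(\x^{(k)}, \ell) \in \hypg$ since each $\x^{(k)}$ belongs to $X \cap W_{ij}$, and analogously for $\x''$ on $\{f = u\} \cap W_{ij}$. The main obstacle is that the convex set $\{\x \in \Rnp : f(\x) \ge \ell\} \cap W_{ij}$ is unbounded for $n > 2$ by Lemma~\ref{lemma:n2finite}, and $\x'$ may itself have $f(\x') > u$ (so $\x' \notin X$), so the decomposition into boundary points cannot be obtained along a short bounded segment. I expect to handle this by exploiting the extra coordinate: pick $h \in N \setminus \{i, j\}$ and, for a common scaling of the wedge variables $x_i, x_j$ (which preserves $W_{ij}$) combined with two independent scalings of $x_h$, construct $\x^{(1)}, \x^{(2)}$ whose convex combination recovers $\x'$ and that both satisfy $f = \ell$. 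The resulting system in these scaling parameters reduces to a log-concavity calculation of the same flavor as Lemma~\ref{lemma:convhyperb}, and its solvability hinges precisely on the extra degree of freedom provided by having $n > 2$.
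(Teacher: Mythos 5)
Your argument is sound and reaches the result by a route that is organized differently from the paper's. The paper splits the hard inclusion into two cases: when $\tilde z > f(\tilde \x)$ it scales $\tilde\x$ using the degree-$\beta$ homogeneity of $f$ itself, so that $\x'=s'\tilde\x$ and $\x''=s''\tilde\x$ land \emph{exactly} on the graph at levels $\ell$ and $u$ (no further decomposition needed there), and it reserves the extra-coordinate construction for the separate case $f(\tilde\x)>u$, where a parametric path $x_i(t)=\tilde x_i-t$, $x_j(t)=r(\tilde x_i-t)$, $x_h(t)=\tilde x_h+t$ plus an intermediate-value argument produces two points on $\{f=u\}\cap W_{ij}$ straddling $\tilde\x$. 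You instead scale along the ray of the cone $\mathcal K$ (degree-one homogeneity of $f'_u$), which only yields $f(\x')\ge\ell$ and $f(\x'')\ge u$, and then apply the extra-coordinate decomposition uniformly to both $\x'$ and $\x''$. This buys a single unified case analysis at the cost of invoking the level-set decomposition twice; both proofs ultimately rest on the same use of the free index $h\in N\setminus\{i,j\}$, which you correctly identify as the step that requires $n>2$. Your multiplicative-scaling version of that step does work: the feasibility of the two scalings reduces to finding two points on the level curve $\mu^{a_i+a_j}\nu^{a_h}=\ell/f(\x')$ whose segment passes through $(1,1)$, which follows from Lemma~\ref{lemma:convhyperb} and an intermediate-value argument (the paper's additive path is the same idea in different coordinates). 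One slip to repair: your weight $\lambda=(u-\tilde z)/(u-\ell)$ lies in $[0,1]$ only for $\tilde z\in[\ell,u]$, so the ray construction as stated does not cover $z_0<\tilde z<\ell$ (note $H$ imposes no lower bound on $z$). The remedy you already give for $\tilde z\le z_0$ --- pair $\tilde z$ directly with a decomposition of $\tilde\x$ into points of $\{f=\ell\}\cap W_{ij}$, each of which dominates $\tilde z$ --- in fact works verbatim for all $\tilde z\le\ell$, so the threshold should simply be $\ell$ rather than $z_0$.
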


\begin{proof}
  $H$ is
  convex: \eqref{eqnpf:upper_cone} is a convex cone, \eqref{eqnpf:zub}
  and \eqref{eqnpf:wedge} are linear, and \eqref{eqnpf:prodlb} defines
  a convex set for Lemma \ref{lemma:convhyperb}.
  Obviously $\hypg(f,X \cap W_{ij})\subseteq H$ since any
  $(\x,z)\in \hypg(f,X \cap W_{ij})$ satisfies
  \eqref{eqnpf:upper_cone} by Lemma \ref{lemma:containment} and
  \eqref{eqnpf:zub}-\eqref{eqnpf:prodlb} by construction. Therefore $E_U(f,X
  \cap W_{ij})\subseteq H$. We prove now that $E_U(f,X \cap
  W_{ij})\supseteq H$.

  Consider $(\tilde \x, \tilde z)\in H$. If $f(\tilde \x) \le u$ and
  $\tilde z \le f(\tilde \x)$, then $(\tilde \x, \tilde z)\in
  \hypg(f,X \cap W_{ij})$ and the result holds. Otherwise,
  there are two cases: $\tilde z > f(\tilde \x)$ and $f(\tilde \x) >
  u$, which are mutually exclusive since $\tilde z > f(\tilde \x) > u$
  violates \eqref{eqnpf:zub}.

  \subparagraph*{Case 1:  $\tilde z > f(\tilde \x)$.}
  Define $f'_u(\x) = z_0 + \left(\gamma\prod_{k\in N}
  x_k^{a_k}\right)^{\frac{1}{\beta}}$.  Since $f(\tilde \x) \le u$, we can
  construct two points $\x'=s'\tilde \x$ and $\x''=s''\tilde \x$, with
  $s'\le 1 \le s''$, such that $f(\x')=\ell$ and $f(\x'')=u$. Because
  $f(s\x)=s^{\beta}f(\x)\forall s\ge 0$, both $s'$ and $s''$
  exist. Moreover, note that $f'_u(s\x)=sf'_u(\x)\forall s\ge 0$ as
  $f'_u$ is a conic function.  By construction of $f'_u$ in Section
  \ref{sec:hulls}, $f'_u(\x') = f(\x') = \ell$ and $f'_u(\x'') =
  f(\x'') = u$. Then $(\tilde \x, f'_u(\tilde \x))\in E_U(f,X \cap
  W_{ij})$ as it is a convex combination of $(\x',\ell)$ and $(\x'',
  u)$, and since $f(\tilde \x) < \tilde z \le f'_u(\tilde \x)$,
  $(\tilde \x, \tilde z)$ is a convex combination of $(\tilde \x,
  f'_u(\tilde \x))$ and $(\tilde \x, f(\tilde \x))\in F(W_{ij})$, thus
  proving that $(\tilde \x, \tilde z)\in E_U(f,X \cap W_{ij})$. Note
  that this part of the proof is also valid for $n = 2$.

  \subparagraph*{Case 2: $f(\tilde \x) > u$.}
  We find $\x'$, $\x''\in F(W_{ij})$ such
  that $\tilde \x$ is a convex combination of $\x'$ and $\x''$. Define
  $r = \tilde x_j / \tilde x_i > 0$; note that $p \le r \le q$. Since
  $n>2$, select $h\in N \setminus \{i,j\}$, then define the parametric
  point $\x(t)$ as follows:
  \[
    \begin{array}{ll}
      x_i(t) = \tilde x_i - t\\
      x_j(t) = r x_i = \tilde x_j - r t = r(\tilde x_i - t)\\
      x_h(t) = \tilde x_h + t\\
      x_k(t) = \tilde x_k \, \forall k\in N \setminus \{i,j,h\}.
    \end{array}
  \]
  Then $g(t) := f(\x(t)) = r^{a_j}(\tilde x_i - t)^{a_i + a_j} (\tilde
  x_h + t)^{a_h} \prod_{k\notin\{i,j,h\}} \tilde x_k$ is a continuous
  function such that $f(\x(0)) > u>0$ and $f(\x(t)) =0$ for $t=-\tilde
  x_h<0$ and for $t=\tilde x_i>0$. Hence there are two values $t',t''$
  such that $-\tilde x_h < t' < 0 < t'' < \tilde x_i$ and
  $f(\x(t'))=f(\x(t''))=u$. Because both $(\x(t'),u)$ and
  $(\x(t''),u)$ are in $F(W_{ij})$ and $f(\tilde \x) \le \tilde z$,
  both $(\x(t'),\tilde z)$ and $(\x(t''),\tilde z)$ are in $\hypg(f,X
  \cap W_{ij})$, implying that $(\tilde \x,\tilde z)$ is a convex
  combination of two points of $\hypg(f,X \cap W_{ij})$. This argument
  also holds for $\beta \le 1$.
\end{proof}

\begin{prop}
  If $\beta \le 1$ and $n>2$, the upper envelope of $f$ over $X \cap W_{ij}$ is
  \begin{eqnarray}
    \nonumber &H =\!\! & \{(\x,z)\in \Rnp\times \mathbb R: \\
    \label{eqnpf2:upper_cone} && \textstyle z \le \prod_{k\in N} x_k^{a_k}\\
    \label{eqnpf2:zub}        && z\le u\\
    \label{eqnpf2:wedge}      && p x_i \le x_j \le q x_i\\
    \label{eqnpf2:prodlb}     && \textstyle \prod_{k\in N} x_k^{a_k} \ge \ell\}.
  \end{eqnarray}
\end{prop}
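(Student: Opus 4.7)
The plan is to follow the structure of the proof of Proposition~\ref{prop:upper_env}, exploiting the two simplifications that appear when $\beta \le 1$: the conic inequality \eqref{eqnpf:upper_cone} collapses into the plain hypograph inequality $z \le \prod_{k \in N} x_k^{a_k}$, and $f$ itself is concave on $\Rnp$. The first step is to check that $H$ is convex. Constraints \eqref{eqnpf2:zub} and \eqref{eqnpf2:wedge} are linear, and \eqref{eqnpf2:prodlb} defines a convex set by Lemma~\ref{lemma:convhyperb}. For \eqref{eqnpf2:upper_cone} I would use the standard fact that $f(\x)=\prod_{k\in N} x_k^{a_k}$ with $\sum_k a_k = \beta \le 1$ and $a_k > 0$ is concave on $\Rnp$: writing $f = g^\beta$ where $g(\x) = \prod_{k\in N} x_k^{a_k/\beta}$ is a weighted geometric mean with weights summing to $1$ (hence concave), concavity of $f$ follows by composition with the concave non-decreasing map $t \mapsto t^\beta$ on $\mathbb R_+$. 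Hence $\{z \le f(\x)\}$ is convex and $H$ is an intersection of convex sets.

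The inclusion $E_U(f, X \cap W_{ij}) \subseteq H$ is then immediate, since any $(\x,z) \in \hypg(f, X \cap W_{ij})$ satisfies each defining constraint of $H$ by construction, and $H$ is convex.

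For the reverse inclusion, I would take $(\tilde \x, \tilde z) \in H$. The key observation is that, unlike in the $\beta \ge 1$ setting, there is no analogue of Case~1 from the proof of Proposition~\ref{prop:upper_env}: the constraint \eqref{eqnpf2:upper_cone} already forces $\tilde z \le f(\tilde \x)$, so no upward perturbation along a conic ray is needed. Two subcases remain. If $f(\tilde \x) \le u$, then $\tilde \x \in X \cap W_{ij}$ and $(\tilde \x, \tilde z) \in \hypg(f, X \cap W_{ij})$ directly. If instead $f(\tilde \x) > u$, I would invoke the Case~2 construction from Proposition~\ref{prop:upper_env} verbatim: choose $h \in N \setminus \{i,j\}$ (possible because $n > 2$), trace the parametric path $\x(t)$ that preserves the ratio $x_j/x_i$ while shifting mass into $x_h$, and locate $t' < 0 < t''$ with $f(\x(t')) = f(\x(t'')) = u$. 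Then $(\tilde \x, \tilde z)$ is a convex combination of $(\x(t'), \tilde z)$ and $(\x(t''), \tilde z)$, both in $\hypg(f, X \cap W_{ij})$ since $\tilde z \le f(\tilde\x)$ forces $\tilde z \le u$. The authors explicitly noted that this Case~2 argument transfers to $\beta \le 1$, so no modification is required there.

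The only genuinely new ingredient compared to Proposition~\ref{prop:upper_env} is the concavity of $f$ for $\beta \le 1$; once that is established, the case analysis collapses and the rest of the argument is essentially mechanical. Consequently I expect this concavity verification, together with the observation that the $\beta \ge 1$ conic envelope degenerates to the hypograph constraint, to be the only step that warrants explicit comment.
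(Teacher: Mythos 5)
Your proposal is correct and follows essentially the same route as the paper: convexity of $H$ via concavity of $f$ for $\beta \le 1$ (the paper asserts this concavity without the geometric-mean argument you supply) plus Lemma~\ref{lemma:convhyperb}, the trivial inclusion $E_U(f,X\cap W_{ij})\subseteq H$, and the reverse inclusion by observing that \eqref{eqnpf2:upper_cone} already forces $\tilde z\le f(\tilde\x)$ so only the subcase $f(\tilde\x)>u$ needs the Case~2 construction of Proposition~\ref{prop:upper_env}. One small slip: in that subcase the needed bound $\tilde z\le u$ does not follow from $\tilde z\le f(\tilde\x)$ (there $f(\tilde\x)>u$); it is simply constraint \eqref{eqnpf2:zub} of $H$.
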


\begin{proof}
  First, $\hypg(f,X \cap W_{ij}) \subseteq H$ because any
  $(\x,z)\in \hypg(f,X \cap W_{ij})$ satisfies
  \eqref{eqnpf2:upper_cone}-\eqref{eqnpf2:prodlb}.  $H$ is also
  convex: \eqref{eqnpf2:upper_cone} is convex because $f(\x)$ is
  concave for $\beta \le 1$ and \eqref{eqnpf2:prodlb} is convex for
  Lemma \ref{lemma:convhyperb}. Therefore we also have $E_U(f,X \cap
  W_{ij}) \subseteq H$. We now prove $H \subseteq E_U(f,X \cap
  W_{ij})$.

  Consider $(\tilde \x, \tilde z) \in H$. If $\tilde z = f(\tilde
  \x)$, then obviously $(\tilde \x, \tilde z) \in X \cap W_{ij}$ and
  the result holds. Let $\tilde z < f(\tilde \x)$. If $f(\tilde \x)
  \le u$, then $\left(\tilde \x, f(\tilde \x)\right)\in F(W_{ij})$ and
  $(\tilde \x, \tilde z)$ belongs to its hypograph and hence to its
  upper envelope. If $f(\tilde \x) > u$, then similar to
  Case 2 of Proposition \ref{prop:upper_env}, one can obtain two
  vectors in $\hypg(f,X \cap W_{ij})$ of which $(\tilde \x, \tilde z)$
  is a convex combination.
\end{proof}

%%%%%%%%%%%%%%%%%%%%%%%%%%%%%%%%%%%%%%%%%%%%%%%%%%%%%%%%%%%%%%%%%
%%%%%%%%%%%%%%%%%%%%%%%%%%%%%%%%%%%%%%%%%%%%%%%%%%%%%%%%%%%%%%%%%
%%%%%%%%%%%%%%%%%%%%%%%%%%%%%%%%%%%%%%%%%%%%%%%%%%%%%%%%%%%%%%%%%
\section{Lower envelope over $X \cap W_{ij}$}
\label{sec:lower_env}

We build on a property of the monomial function $f$ for general $n\ge
2$ to derive a few results leading to the lower envelope of $f$ over
$X \cap W_{ij}$ for $n=2$. Define the level set $C_\xi = \{\x\in \Rnp:
f(\x) = \xi\}$ and the two subspaces $P_{ij} = \{\x\in \Rnp: x_j = p
x_i\}$ and $Q_{ij} = \{\x\in \Rnp: x_j = q x_i\}$. There
is a bijection from $C_\xi\cap P_{ij}$ to $C_\xi \cap Q_{ij}$ such
that all pairs of points are joined by parallel lines.

\begin{figure}[t]

  \centering

  \def\pa{0.55} % assume p = -\pa and p_j = 1, same for q
  \def\qa{2.4}
  \def\expAI{1}
  \def\expAJ{1}

  % pi xi -    xj = 0  ==> xj = pi xi
  % xi^ai * xj^aj = \ell  ==> xi^ai * (pi)^aj * xi^aj = L
  %                    ==> xi = (\ell / (pi)^aj)^(1/(ai+aj))

  \subfigure[$\xi\in\{2,4,8,16,32\}$, $(a_1, a_2) = (\expAI,\expAJ)$,
      $(p,q)=(\pa,\qa)$]{
    \includegraphics[width=.45\textwidth]{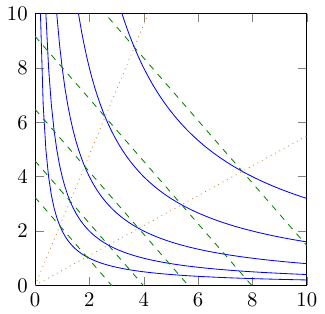}
  }
%
%\qquad
%
  \def\pb{0.19}
  \def\qb{3.6}
  \def\expBI{0.3}
  \def\expBJ{0.6}
\subfigure[$\xi\in\{1,2,3\}$, $(a_1, a_2) =
    (\expBI,\expBJ)$, $(p,q)=(\pb,\qb)$]{
  \includegraphics[width=.45\textwidth]{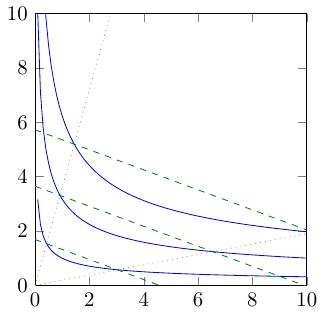}
  }

  \def\pc{1.5}
  \def\qc{4.9}
  \def\expCI{3.9}
  \def\expCJ{7.6}

  \subfigure[$\xi\in\{8,2^9,2^{15},2^{21}\}$, $(a_1, a_2) = (\expCI,\expCJ)$,
      $(p,q)=(\pc,\qc)$]{
    \includegraphics[width=.45\textwidth]{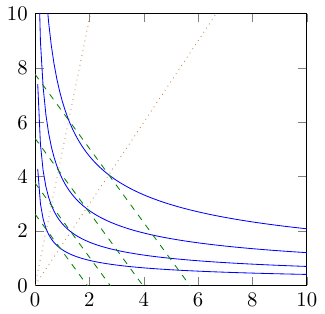}
  }
%
%  \qquad
%
  \def\pd{0.2}
  \def\qd{9}
  \def\expDI{0.6}
  \def\expDJ{0.4}
  \subfigure[$\xi\in\{0.5,1,2\}$, $(a_1, a_2) = (\expDI,\expDJ)$,
      $(p,q)=(\pd,\qd)$]{
    \includegraphics[width=.45\textwidth]{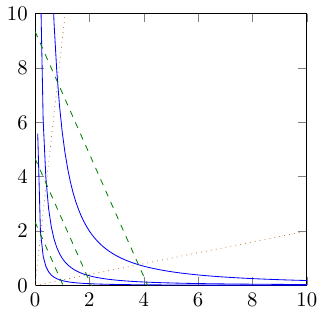}
  }
  \caption{Solid lines are the level curves $\{\x\in \mathbb R_+^2:
    x_1^{a_1}x_2^{a_2} = \xi\}$; the dotted lines $x_2 = p x$ and $x_2
    = q x_1$ intersect each level curve in one point each. The dashed
    lines through the intersections of the level curves and the dotted
    lines are parallel to one another.}
  \label{fig:parallels}
\end{figure}

Let us illustrate the property on
bivariate functions first. It is easy to show that for $n=2$ and $a_1
= a_2 = 1$, $C_\xi \cap P_{ij} = \{\check{\x}\}$ and $C_\xi \cap Q_{ij} =
\{\hat{\x}\}$ where
\[
  \check{\x} = \left(\sqrt{p\xi}, \sqrt{\frac{1}{p}\xi}\right), \quad
  \hat{\x} = \left(\sqrt{q\xi}, \sqrt{\frac{1}{q}\xi}\right).
\]
The line through $\check{\x}$ and $\hat{\x}$ is given by the linear equation
\[
\det\left(\left[
\begin{array}{ccc}
x_1           & x_2                     & 1\\
\sqrt{p\xi} & \sqrt{\frac{1}{p}\xi} & 1\\
\sqrt{q\xi} & \sqrt{\frac{1}{q}\xi} & 1\\
\end{array}
\right]\right) = 0,
\]
and its slope $\frac{1/\sqrt{p} - 1/\sqrt{q}}{\sqrt{p} - \sqrt{q}}
= -(pq)^{-\frac{1}{2}}$ is independent of $\xi$. This also
holds for general positive exponents $(a_1, a_2)\neq(1,1)$, in which
case the coefficients of $x_1$ and $x_2$ are proportional to
$\xi^{\frac{1}{\beta}}$ rather than $\sqrt{\xi}$.
Figure \ref{fig:parallels} shows examples for different values of
$a_1$, $a_2$, $p$, $q$, and $\xi$.
We prove that this
generalizes to $n\ge 2$.

\begin{lemma}
  Given $\aa \in \Rnp$, $i,j\in N$, $\xi \in \mathbb R$, $p, q \in \mathbb R_+$ with $i\neq
  j$ and $0 < p < q$, there exist $d_i < 0$ and $d_j > 0$ such that
  for any $\check \x$ that satisfies $\check x_j = p \check x_i$ and
  $\prod_{i\in N} \check x_i^{a_i} = \xi$, there exists a unique
  solution $(\bar s,\bar \x)\in \mathbb R_+ \times \Rnp$ to the
  nonlinear system
  \begin{eqnarray}
    \label{eq:qxzero}  &&\bar x_j = q \bar x_i\\
    \label{eq:barxij}  &&(\bar x_i, \bar x_j) = (\check x_i + \bar s d_i, \check x_j + \bar s d_j)\\
    \label{eq:barx}    &&\bar x_k = \check x_k \quad \forall k\notin\{i,j\}\\
    \label{eq:barprod} &&\textstyle\prod_{i\in N} \bar x_i^{a_i} = \xi.
  \end{eqnarray}
\end{lemma}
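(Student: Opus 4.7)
The plan is to first determine, given any $\check{\x}$ on $C_\xi \cap P_{ij}$, what $\bar{\x}$ must be under conditions \eqref{eq:qxzero}, \eqref{eq:barx}, and \eqref{eq:barprod} alone, and then recognize that the displacement $\bar{\x} - \check{\x}$ factors as a positive scalar times a direction vector depending only on $\aa, p, q$. The parametric requirement \eqref{eq:barxij} will then identify $(d_i, d_j)$, and uniqueness will fall out because \eqref{eq:qxzero} combined with \eqref{eq:barxij} reduces to a single linear equation in $\bar s$.

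For the first step, \eqref{eq:barx} together with \eqref{eq:barprod} reduces the product equation to $\bar x_i^{a_i}\bar x_j^{a_j} = \check x_i^{a_i}\check x_j^{a_j}$. Substituting $\check x_j = p \check x_i$ and $\bar x_j = q \bar x_i$ yields $\bar x_i^{a_i+a_j}\, q^{a_j} = \check x_i^{a_i+a_j}\, p^{a_j}$, hence $\bar x_i = \rho\, \check x_i$ with $\rho := (p/q)^{a_j/(a_i+a_j)}$; this pins down $\bar{\x}$ uniquely from $\check{\x}$. The induced displacement in the $(x_i, x_j)$-plane is $(\bar x_i - \check x_i,\ \bar x_j - \check x_j) = \check x_i \cdot (\rho - 1,\ q\rho - p)$, a strictly positive scalar multiple of a vector that is \emph{independent} of $\check{\x}$. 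I therefore set $d_i := \rho - 1$, $d_j := q\rho - p$, and $\bar s := \check x_i > 0$.

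The sign conditions are immediate: $p < q$ and $a_j > 0$ give $\rho < 1$, so $d_i < 0$; while $d_j = p\bigl((q/p)^{a_i/(a_i+a_j)} - 1\bigr) > 0$ since $q > p$ and $a_i > 0$. For uniqueness with this choice of $(d_i, d_j)$, conditions \eqref{eq:qxzero} and \eqref{eq:barxij} collapse to the single linear equation $\check x_j + \bar s\, d_j = q(\check x_i + \bar s\, d_i)$; substituting $\check x_j = p \check x_i$ and using $d_j - q d_i = q\rho - p - q(\rho-1) = q-p$ gives the unique solution $\bar s = \check x_i$. Condition \eqref{eq:barprod} then holds automatically, by the very construction of $\rho$. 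I do not anticipate a genuine obstacle: the argument is elementary algebra once one notices that the displacement factors as $\check x_i$ times a $\check{\x}$-independent direction, which is precisely the algebraic content of the parallel-lines phenomenon illustrated in Figure~\ref{fig:parallels}.
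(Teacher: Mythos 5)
Your proof is correct and follows essentially the same route as the paper's: you derive the scaling factor $\rho=(p/q)^{a_j/(a_i+a_j)}$ (the paper's $\eta_i$) from \eqref{eq:qxzero}, \eqref{eq:barx}, \eqref{eq:barprod}, observe that the displacement is $\check x_i$ times a $\check\x$-independent direction with the required signs, and settle uniqueness via the nondegenerate linear equation $d_j-qd_i=q-p\neq 0$. Your $(d_i,d_j)$ differ from the paper's choice only by the positive factor $p^{a_j/(a_i+a_j)}$, which is immaterial both for the lemma and for the later results, since $\lambda\,(d_j x_i - d_i x_j)^{a_i+a_j}$ is invariant under positive rescaling of $(d_i,d_j)$.
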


\begin{proof}
We prove that there exist $\eta_1$ and $\eta_2$ such that $(\bar
x_i,\bar x_j) = (\eta_i \check x_i, \eta_j \check x_j)$.
First, \eqref{eq:barprod} requires $\prod_{k\in N} \bar x_k^{a_k} =
\eta_i^{a_i} \eta_j^{a_j} \prod_{k\in N} \check x_k^{a_k} = \xi$,
i.e., $\eta_i^{a_i} \eta_j^{a_j} = 1$.
Given that $\check x_j = p \check x_i$ and $\eta_j \check x_j= q
\eta_i \check x_i$, solving both for $\frac{\check x_i}{\check x_j}$
yields $\frac{1}{p} = \frac{\eta_j}{q \eta_i}$, or
\begin{equation}
  \label{eq:pqratio}
  \frac{\eta_j}{\eta_i} = \frac{q}{p}.
\end{equation}
This yields $\eta_i = \left(\frac{q}{p}\right)^{-\frac{a_j}{a_i +
    a_j}} < 1$ and $\eta_j=\eta_i \frac{q}{p} =
\left(\frac{q}{p}\right)^{\frac{a_i}{a_i + a_j}} > 1$.

In order to obtain $d_i$ and $d_j$, observe that
\[
  \begin{array}{lll}
    \check x_i + \bar s d_i = \eta_i \check x_i\\
    \check x_j + \bar s d_j = \eta_j \check x_j.\\
  \end{array}
\]
Solving both for $\bar s$, we have
$
\frac{1}{d_i} \check x_i(\eta_i - 1) =
\frac{1}{d_j} \check x_j(\eta_j - 1)
$,
and since $\frac{\check x_i}{\check x_j} = \frac{1}{p}$ we get
\begin{equation}
  \label{eq:didjratio}
  \frac{d_i}{d_j}\frac{\eta_j - 1}{\eta_i - 1} = \frac{1}{p}.
\end{equation}
%
%which indicates that $d_i d_j < 0$ because $\eta_i < 1 < \eta_j$.
For
convenience, define fractions $\varphi_i = \frac{a_i}{a_i+a_j}$ and $\varphi_j =
\frac{a_j}{a_i+a_j}$. Note that
\[
  \frac{\eta_j-1}{\eta_i-1} =
  \frac{q^{\varphi_i}- p^{\varphi_i}}{p^{\varphi_i}} \cdot
  \frac{p^{-\varphi_j}}{q^{-\varphi_j}- p^{-\varphi_j}} =
  \frac{q^{\varphi_i}- p^{\varphi_i}}{q^{-\varphi_j}- p^{-\varphi_j}}\cdot\frac{1}{p}.
\]
Because $d_i,d_j$ define a direction, rather than normalizing them
(which would lead to complications in the next section) we determine
$d_i, d_j$ as denominator and numerator of the fraction above:
\[
  \begin{array}{llclc}
    d_i &=& q^{-\varphi_j} - p^{-\varphi_j} &=& q^{-a_j/(a_i+a_j)} - p^{-a_j/(a_i+a_j)}\\
    d_j &=& q^{\varphi_i}  - p^{\varphi_i}  &=& q^{a_i/(a_i+a_j)}  - p^{a_i/(a_i+a_j)}.
  \end{array}
\]
Note that (i) $d_i < 0 < d_j$  (ii) both $d_i$ and $d_j$ only depend on
$p$, $q$, $a_i$, and $a_j$, and not on
$\check x_i$ or $\check x_j$; (iii) they are always defined.
Hence, there exist unique $d_i,d_j$ such
that for any $\check \x$ such that $\check x_j = p \check x_i$, the
parametric nonlinear system \eqref{eq:qxzero}--\eqref{eq:barprod}
admits the solution $(\bar s, \bar \x) \in \mathbb R_+ \times
\Rnp$. Non-negativity of $\bar \x$, in particular of $\bar x_i$ and $\bar
x_j$, follows from $0 < \eta_i < 1$ and $\eta_j > 1$, while $\bar s\ge
0$ since $d_i<0$ and $\check x_i + \bar s d_i = \eta_i\check x_i \le
\check x_i$.
\end{proof}

Although $\bar s$ does depend on $\check \x$, the key fact is that
direction $(d_i,d_j)$ defines a
bijection from $P_{ij}$ to $Q_{ij}$ by joining pairs of points that have
same value of the function $\prod_{i\in N} x_i^{a_i}$. This suggests
that on a direction orthogonal to $(d_i,d_j)$, i.e., $(d_j, -d_i)$, we
can define a lower-bounding function that matches the values on
$P_{ij}$ and $Q_{ij}$ and that, for $n=2$, is the lower envelope of
$f(\x)$.

\begin{prop}
\label{prop:lower_betahi}
The function $f'_\ell(\x) = \lambda (d_j x_i - d_i x_j)^{a_i +
  a_j} \prod_{k\in N\setminus\{i,j\}} x_k^{a_k}$, with
$\lambda = p^{a_j} / \left(d_j - d_i p\right)^{a_i +
  a_j}$,
\begin{enumerate}[(a)]
  \item \label{prop:lowenv:match} matches the value of $f(\x)$ for
    $\x\in P_{ij}\cup Q_{ij}$;
  \item \label{prop:lowenv:minor} is a minorant of $f(\x)$ for $\x
    \in W_{ij}$;
  \item \label{prop:lowenv:enve} is, for $n=2$ and $\beta = a_1 + a_2 \ge 1$,
    the lower envelope of $f(\x)$ in $W_{ij}$.
\end{enumerate}
\end{prop}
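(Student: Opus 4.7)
The plan is to establish (a) by direct substitution into the formula for $f'_\ell$, (b) by reducing to a univariate concavity inequality, and (c) by combining the convexity of $f'_\ell$ on $\mathbb R_+^2$ with the bijection furnished by the preceding lemma.

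For (a), substituting $x_j = p x_i$ gives $d_j x_i - d_i x_j = (d_j - d_i p)\, x_i$, and by the very definition of $\lambda$ we have $\lambda(d_j - d_i p)^{a_i+a_j} = p^{a_j}$; hence $f'_\ell(\x) = p^{a_j} x_i^{a_i+a_j}\prod_{k\notin\{i,j\}} x_k^{a_k} = x_i^{a_i}(p x_i)^{a_j}\prod_{k\notin\{i,j\}} x_k^{a_k} = f(\x)$ on $P_{ij}$. On $Q_{ij}$ the identity $f'_\ell = f$ reduces to $\lambda(d_j - d_i q)^{a_i+a_j} = q^{a_j}$, equivalently $(d_j - d_i q)/(d_j - d_i p) = (q/p)^{\varphi_j}$ with $\varphi_j = a_j/(a_i+a_j)$. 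Plugging in the explicit $d_i, d_j$ and using $\varphi_i + \varphi_j = 1$, I expect the telescoping $d_j - d_i p = q^{-\varphi_j}(q-p)$ and $d_j - d_i q = p^{-\varphi_j}(q-p)$, whose ratio is $(q/p)^{\varphi_j}$.

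For (b), set $r = x_j/x_i \in [p,q]$ and divide $f'_\ell(\x) \le f(\x)$ by the common positive factor $x_i^{a_i+a_j}\prod_{k\notin\{i,j\}} x_k^{a_k}$ to reduce to the univariate inequality $\lambda(d_j - d_i r)^{a_i+a_j} \le r^{a_j}$. Taking $(a_i+a_j)$-th roots of positive quantities, this becomes $\lambda^{1/(a_i+a_j)}(d_j - d_i r) \le r^{\varphi_j}$. The left side is affine in $r$ and coincides with the right side at $r \in \{p,q\}$ by (a); since $\varphi_j \in (0,1)$, the map $r\mapsto r^{\varphi_j}$ is concave on $[p,q]$ and therefore lies above its chord, which is exactly the left side.

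For (c), with $n=2$ the empty product reduces $f'_\ell$ to $\lambda(d_j x_i - d_i x_j)^\beta$; since $d_i<0<d_j$ the affine form $d_j x_i - d_i x_j$ is nonnegative on $\mathbb R_+^2$, and $t\mapsto t^\beta$ is convex nondecreasing on $[0,\infty)$ for $\beta\ge 1$, so $f'_\ell$ is convex on $\mathbb R_+^2$. The inclusion $E_L(f, W_{ij}) \subseteq \epig(f'_\ell, W_{ij})$ follows by Jensen: any $(\tilde \x, \tilde z)\in E_L(f, W_{ij})$ is a finite convex combination $\sum_k \mu_k(\x_k, z_k)$ with $z_k \ge f(\x_k) \ge f'_\ell(\x_k)$ by (b), whence $f'_\ell(\tilde \x) \le \sum_k \mu_k f'_\ell(\x_k) \le \sum_k \mu_k z_k = \tilde z$. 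For the reverse inclusion the key observation is that $f'_\ell$ is constant along the direction $(d_i, d_j)$, since $d_j(x_i + t d_i) - d_i(x_j + t d_j) = d_j x_i - d_i x_j$ for every $t$. Given $\tilde \x \in W_{ij}$, the line through $\tilde \x$ in direction $(d_i, d_j)$ meets $P_{ij}$ at some $\check \x$ and $Q_{ij}$ at some $\hat \x$ with $\tilde \x = (1 - \mu)\check \x + \mu \hat \x$ for some $\mu \in [0,1]$; the preceding lemma combined with (a) gives $f(\check \x) = f(\hat \x) = f'_\ell(\tilde \x)$, so $(\tilde \x, f'_\ell(\tilde \x))$ is a convex combination of two graph points of $f$ over $W_{ij}$ and hence belongs to $E_L(f, W_{ij})$. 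Upward shifts remain in $E_L(f, W_{ij})$ (redistribute the increment onto the $z$-coordinate of one of the two graph points), so every $(\tilde \x, \tilde z)$ with $\tilde z \ge f'_\ell(\tilde \x)$ belongs to $E_L(f, W_{ij})$.

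The main obstacle is the algebraic identity underlying (a) on $Q_{ij}$: once $d_j - d_i p$ and $d_j - d_i q$ are simplified to $q^{-\varphi_j}(q-p)$ and $p^{-\varphi_j}(q-p)$, parts (b) and (c) assemble from standard ingredients, namely concavity of $r^{\varphi_j}$, convexity of the $\beta$-th power of a nonnegative affine form, and the constancy of $f'_\ell$ along the direction $(d_i, d_j)$.
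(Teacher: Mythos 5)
Your proposal is correct, and while it follows the same overall skeleton as the paper (direct substitution for (a), a scalar concavity inequality for (b), and the parallel-lines structure for (c)), the details are genuinely different in each part. For (a), the paper computes the two candidate values $\lambda$ and $\lambda'$ from $P_{ij}$ and $Q_{ij}$ and shows they coincide via the relations $d_j(\eta_i-1)=p\,d_i(\eta_j-1)$, $p\eta_j=q\eta_i$, and $\eta_i^{a_i}\eta_j^{a_j}=1$; you instead verify the closed-form factorizations $d_j-d_ip=q^{-\varphi_j}(q-p)$ and $d_j-d_iq=p^{-\varphi_j}(q-p)$ directly from the explicit $d_i,d_j$, which is shorter and makes the constancy of $\lambda$ transparent. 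For (b), the paper writes $\tilde\x$ as a convex combination of points of $P_{ij}$ and $Q_{ij}$ and proves $(\mu+(1-\mu)\eta_i)^{a_i}(\mu+(1-\mu)\eta_j)^{a_j}\ge 1$ by concavity of the logarithm; your reduction to the univariate statement ``the affine function $\lambda^{1/(a_i+a_j)}(d_j-d_ir)$ interpolates $r^{\varphi_j}$ at $r=p,q$ and a concave function lies above its chord'' is an equivalent but more economical argument (note $d_j-d_ir>0$ on $[p,q]$ so the root-taking is legitimate, and the degenerate cases $x_i=0$ or $x_k=0$ are trivially handled since both sides vanish). For (c), the paper only shows that no convex minorant of $f$ can exceed $f'_\ell$ anywhere, i.e., that $f'_\ell$ is the largest convex underestimator; you prove the set identity $E_L(f,W_{12})=\epig(f'_\ell,W_{12})$ by establishing both inclusions (Jensen for one direction, the convex-combination construction along the level lines of $f'_\ell$ for the other), which is arguably closer to the paper's own definition of lower envelope as the convex hull of the epigraph. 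All steps check out; there is no gap.
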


\begin{proof}
\noindent {\bf \ref{prop:lowenv:match}}
 In order to prove $f'_\ell(\x)
= f(\x)\forall \x\in P_{ij}\cup Q_{ij}$, it suffices to prove that
\[
  \lambda (d_j x_i - d_i x_j)^{a_i + a_j} = x_i^{a_i} x_j^{a_j}
  \,\forall x_i,x_j: x_j = p x_i \vee x_j = q x_i.
\]
We prove this for $\x\in P_{ij}$ first. For $x_j = p x_i$ we have
\[
  \begin{array}{rcl}
    \lambda (d_j x_i - d_i (p x_i))^{a_i + a_j} &=&  x_i^{a_i} (p x_i)^{a_j}\\
    \lambda (d_j - p d_i)^{a_i + a_j} x_i^{a_i + a_j} &=& p^{a_j} x_i^{a_i + a_j}\\
    \lambda (d_j - p d_i)^{a_i + a_j}  &=& p^{a_j},
  \end{array}
\]
which yields $\lambda = \frac{p^{a_j}}{(d_j - p d_i)^{a_i +
    a_j}}$ in the statement. For $x_j = q x_i$ we obtain
$\lambda' = \frac{q^{a_j}}{(d_j - q d_i)^{a_i +
    a_j}}$. To prove that $\lambda = \lambda'$, observe that from
\eqref{eq:didjratio} we obtain $d_j(\eta_i - 1) = p d_i (\eta_j - 1)$,
which implies
\[
  d_j - p d_i = d_j \eta_i - p d_i \eta_j.
\]
From \eqref{eq:pqratio} we obtain $p \eta_j = q \eta_i$, which means
$d_j - p d_i = (d_j - q d_i)\eta_i$. Also,
$\eta_i^{a_i}\eta_j^{a_j} = 1$ implies $\eta_i^{a_i} =
\eta_j^{-a_j}$. Therefore,
\[
\begin{array}{llll}
  \lambda' & = & \frac{q^{a_j}}{(d_j - q d_i)^{a_i + a_j}}\\
           & = & \frac{q^{a_j}}{(d_j - p d_i)^{a_i + a_j} \eta_i^{a_i + a_j}}\\
           & = & \frac{q^{a_j}\eta_j^{a_j}}{(d_j - p d_i)^{a_i + a_j} \eta_i^{a_j}},\\
\end{array}
\]
but \eqref{eq:pqratio} implies $\frac{q^{a_j}
  \eta_j^{a_j}}{\eta_i^{a_j}} = p^{a_j}$, hence the two values of
$\lambda$ match.

\medskip

\noindent {\bf \ref{prop:lowenv:minor}}
First, $\tilde \x$ is a convex
combination $\mu \check \x + (1 - \mu) \hat \x$ of two points $\check
\x \in P_{ij}$ and $\hat \x\in Q_{ij}$ such that $f'_\ell(\check \x) =
f'_\ell(\hat \x) = f'_\ell(\tilde \x)$.
We construct $\check\x$ and $\hat\x$ as follows: $\check x_h = \hat x_h = \tilde x_h$
for $i\neq h\neq j$; then we find $\check x_i$ and $\check x_j$
by
solving the linear system $d_j \check x_i - d_i \check x_j = d_j
\tilde x_i - d_i \tilde x_j; \check x_j = p \check x_i$, and similar
for $\hat \x$.
It is easily proven that both systems admit a non-negative solution.
Hence we have to prove that $f(\tilde \x) \ge f(\check
\x) = f(\hat \x)$. Note that $\tilde x_i = \mu \check x_i +
(1-\mu)\eta_i \check x_i$ and similar for $\tilde x_j$, therefore
\[
  \begin{array}{lll}
  f(\tilde \x) &=& \prod_{k\in N\setminus \{i,j\}} \tilde x_k^{a_k} \cdot \tilde x_i^{a_i} \tilde x_j^{a_j}\\
               &=& \prod_{k\in N\setminus \{i,j\}} \check x_k^{a_k} \cdot \check x_i^{a_i} \check x_j^{a_j} \cdot (\mu + (1 - \mu)\eta_i)^{a_i} (\mu + (1 - \mu)\eta_j)^{a_j},\\
  \end{array}
\]
and we simply need to prove that $(\mu + (1 - \mu)\eta_i)^{a_i} (\mu +
(1 - \mu)\eta_j)^{a_j} \ge 1$. By concavity of the logarithm,
\[
\begin{array}{ll}
      & \log\left((\mu + (1 - \mu)\eta_i)^{a_i} (\mu + (1 - \mu)\eta_j)^{a_j}\right)\\
    = & a_i \log(\mu + (1 - \mu)\eta_i) + a_j \log(\mu + (1 - \mu)\eta_j) \\
  \ge & a_i (\mu \log(1) + (1 - \mu)\log \eta_i) +  a_j (\mu \log(1) + (1 - \mu)\log \eta_j) \\
    = & (1 - \mu) (a_i \log \eta_i + a_j \log \eta_j) \\
    = & (1 - \mu) \log(\eta_i^{a_i}\eta_j^{a_j}) = 0,
\end{array}
\]
which proves this part.
\medskip

\noindent {\bf \ref{prop:lowenv:enve}}
For $n=2$, the function
$f'_\ell(\x) = \lambda(d_j x_i - d_i x_j)^\beta$ is convex since
$\beta\ge 1$. Building on \ref{prop:lowenv:match} and
\ref{prop:lowenv:minor}, suppose another convex function $g:\mathbb
R^2\to \mathbb R$ exists such that $g(\x)\le f(\x)$ for $\x\in W_{12}$
and there exists $\tilde \x$ such that $ g(\tilde \x) > f'_\ell(\tilde
\x)$. Clearly $\tilde \x\notin (P_{ij}\cup Q_{ij})$ as otherwise
\ref{prop:lowenv:match} would imply $g(\tilde \x) > f(\tilde
\x)$. Similar to the argument in \ref{prop:lowenv:minor}, consider
then $\check \x\in P_{ij}$ and $\hat \x \in Q_{ij}$ such that $d_2
\check x_1 - d_1 \check x_2 = d_2 \tilde x_1 - d_1 \tilde x_2 = d_2
\hat x_1 - d_1 \hat x_2$, i.e., the three points are aligned on a
level curve of $f'_\ell(\x)$.  Obviously $\tilde \x$ is a convex
combination (with weight denoted by $\mu$) of $\check \x$ and $\hat
\x$ and $f(\check \x) = f(\hat \x) = f'_\ell(\check \x) = f'_\ell(\hat
\x) = f'_\ell(\tilde \x) < g(\tilde \x)$. This implies that $g(\tilde
\x) > \mu g(\check \x) + (1-\mu)g(\hat \x)$ and hence $g$ is not
convex, a contradiction.
\end{proof}

Note that case \ref{prop:lowenv:enve} of the previous proposition is
valid over the whole $W_{12}$; we are interested in the lower envelope
over $X\cap W_{12}$, which will take an extra step. We first prove a
similar result for $\beta \le 1$.

\begin{prop}
  \label{prop:lower_betalow}
  For $\beta \le 1$, the function
\[
  \textstyle f''_\ell(\x) = \zeta(d_j x_i - d_i x_j)^{\frac{a_i + a_j}{\beta}}
                       \left(\prod_{k\in N\setminus\{i,j\}}x_k^{a_k}\right)^{\frac{1}{\beta}} +
                       z_0,
\]
  with $\zeta = \lambda^{1/\beta} \frac{u-\ell}{u^{1/\beta} -
    \ell^{1/\beta}}$ and $z_0$ as defined in \eqref{eq:defzgamma},
\begin{enumerate}[(a)]
  \item \label{prop:lower_betalow:matches} matches $f(\x)$ at
    $(P_{ij}\cup Q_{ij})\cap (C_\ell\cup C_u)$;
  \item \label{prop:lower_betalow:minorant} is a minorant of $f$ in
    $X \cap W_{ij}$;
  \item \label{prop:lower_betalow:envelope} is the lower envelope of
    $f$ over $\conv((P_{ij}\cup Q_{ij})\cap (C_\ell\cup C_u))$ for
    $n=2$.
\end{enumerate}
\end{prop}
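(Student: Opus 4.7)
The plan is to observe that $f''_\ell$ factors through $f'_\ell$ from Prop.~\ref{prop:lower_betahi}: writing $g(y) = \frac{u-\ell}{u^{1/\beta}-\ell^{1/\beta}}\,y^{1/\beta} + z_0$, a direct manipulation (raising $f'_\ell$ to the power $1/\beta$ absorbs $\lambda^{1/\beta}$ into $\zeta$) gives $f''_\ell(\x) = g(f'_\ell(\x))$. This reduces all three claims to scalar properties of $g$ combined with the corresponding parts of Prop.~\ref{prop:lower_betahi}.

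For part (\ref{prop:lower_betalow:matches}), I will use Prop.~\ref{prop:lower_betahi}(\ref{prop:lowenv:match}) to obtain $f'_\ell(\x)=f(\x)$ on $P_{ij}\cup Q_{ij}$, so the matching claim at $C_\ell\cup C_u$ collapses to the scalar identities $g(\ell)=\ell$ and $g(u)=u$, routine algebraic verifications against the expression for $z_0$ in \eqref{eq:defzgamma}.

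For part (\ref{prop:lower_betalow:minorant}), the idea is that $g$ is monotone increasing on $\mathbb R_+$ (positive leading coefficient, $1/\beta>0$) and convex (since $\beta\le 1$ yields $1/\beta\ge 1$). Convexity together with $g(\ell)=\ell$, $g(u)=u$ forces $g(y)\le y$ on $[\ell,u]$, as on this interval the identity $y\mapsto y$ is precisely the chord joining $(\ell,g(\ell))$ and $(u,g(u))$. For $\x\in X\cap W_{ij}$, Prop.~\ref{prop:lower_betahi}(\ref{prop:lowenv:minor}) gives $0\le f'_\ell(\x)\le f(\x)\in[\ell,u]$, so the chain $f''_\ell(\x) = g(f'_\ell(\x)) \le g(f(\x)) \le f(\x)$ delivers the minorization.

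For part (\ref{prop:lower_betalow:envelope}), the simplification $n=2$ is central: the empty product equals $1$ and $(a_i+a_j)/\beta = 1$, so $f''_\ell(\x)=\zeta(d_j x_i - d_i x_j)+z_0$ is affine. Let $Q=\conv((P_{ij}\cup Q_{ij})\cap(C_\ell\cup C_u))$, a quadrilateral with vertices $v_1,\ldots,v_4$ of $f$-value $\ell$ or $u$. The plan is first to extend $f''_\ell\le f$ from $X\cap W_{ij}$ to all of $Q$ by a case split: if $f(\x)\le u$, concavity of $f$ (valid since $\beta\le 1$) together with $f(v_k)\ge \ell$ forces $f(\x)\ge \ell$, placing $\x$ in $X\cap W_{ij}$ so part (\ref{prop:lower_betalow:minorant}) applies; if $f(\x)>u$, then $f''_\ell(\x)\le u<f(\x)$ because $f''_\ell$ is affine on $Q$ with vertex values bounded by $u$. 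The envelope claim then follows by a convex-combination argument at the extreme points of $Q$: any convex minorant $h$ of $f$ on $Q$ satisfies $h(v_k)\le f(v_k)=f''_\ell(v_k)$ at each vertex by part (\ref{prop:lower_betalow:matches}), and for $\x=\sum\mu_k v_k\in Q$, convexity of $h$ and affineness of $f''_\ell$ give $h(\x)\le \sum\mu_k h(v_k)\le \sum\mu_k f''_\ell(v_k)=f''_\ell(\x)$. The one subtle point I expect to be the main obstacle is precisely that $Q$ need not be contained in $X$ (concavity of $f$ can push it above $u$ in the interior), so the affine upper bound $f''_\ell\le u$ on $Q$ is essential to bridge the gap that part (\ref{prop:lower_betalow:minorant}) alone cannot cover.
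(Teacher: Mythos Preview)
Your proposal is correct and follows essentially the same route as the paper: parts (a) and (b) both reduce to Proposition~\ref{prop:lower_betahi} via the $1/\beta$-power transformation---your composition $f''_\ell = g\circ f'_\ell$ is a clean repackaging of the paper's algebra, and your chord inequality $g(y)\le y$ on $[\ell,u]$ is precisely the paper's invocation of Lemma~\ref{lemma:containment} for $\beta\le 1$ (indeed $g(f(\x))=f_u(\x)$)---while (c) rests on the affineness of $f''_\ell$ for $n=2$ together with vertex matching. Your handling of (c) is in fact more careful than the paper's, since you explicitly extend the minorization $f''_\ell\le f$ from $X\cap W_{ij}$ to all of $Q$ via the case split, a point the paper leaves implicit.
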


\begin{proof}
  {\bf \ref{prop:lower_betalow:matches}} From Proposition
  \ref{prop:lower_betahi} (a-b), $\lambda(d_j x_i - d_i x_j)^{a_i +
    a_j}\prod_{k\in N\setminus\{i,j\}} x_k^{a_k}$ is a minorant of $f$
  within the same domain and matches $f$ in $P_{ij}\cup
  Q_{ij}$. Hence, for $\x\in P_{ij}\cup Q_{ij}$ such that
  $f(\x)=\ell$ we have $\lambda(d_j x_i - d_i x_j)^{a_i +
    a_j}\prod_{k\in N\setminus\{i,j\}} x_k^{a_k}= \ell$, i.e., $(d_j x_i - d_i x_j)^{\frac{a_i +
      a_j}{\beta}}\left(\prod_{k\in N\setminus\{i,j\}} x_k^{a_k}
  \right)^{\frac{1}{\beta}}= \left( \frac{\ell}{\lambda}
  \right)^{\frac{1}{\beta}}$, and similar for $u$. Therefore
\[
  \begin{array}{ll}
    \zeta\left(\frac{\ell}{\lambda}\right)^{\frac{1}{\beta}} + \zeta_0 = \ell\\
    \zeta\left(\frac{u}{\lambda}\right)^{\frac{1}{\beta}} + \zeta_0 = u,\\
  \end{array}
\]
which is a linear system in $\zeta, \zeta_0$ with solutions
\[
  \begin{array}{l}
  \zeta = \frac{u-\ell}{\left(\frac{u}{\lambda}\right)^{\frac{1}{\beta}} - \left(\frac{\ell}{\lambda}\right)^{\frac{1}{\beta}}}
        = \lambda^{\frac{1}{\beta}} \frac{u-\ell}{u^{\frac{1}{\beta}} - \ell^{\frac{1}{\beta}}} = (\gamma \lambda)^{\frac{1}{\beta}}\\
  \zeta_0 = \ell - \zeta\left(\frac{\ell}{\lambda}\right)^{1/\beta}
  = \ell - \frac{ (u-\ell)\ell^{1/\beta}}{u^{1/\beta} -
    \ell^{1/\beta}}=\frac{u^{1/\beta}\ell -\ell^{1/\beta}u}{u^{1/\beta} -
    \ell^{1/\beta}} = z_0.
  \end{array}
\]
Therefore, $f''_\ell(\x) = f'_\ell(\x) = f(\x)$ for $\x$ such that
$x_j/x_i \in \{p, q\}, f(\x) \in \{\ell,u\}$.
\medskip

\noindent {\bf \ref{prop:lower_betalow:minorant}}
%%  Because $f''_\ell$
%% is defined as a maximum between $\ell$ and a non-constant function and
%% we consider $\x$ such that $f(\x)\in [\ell,u]$, we need to prove that
%% %
%% \[
%% \textstyle f(\x) \ge \zeta(d_j x_i - d_i x_j)^{\frac{a_i + a_j}{\beta}}
%% \left(\prod_{k\in N\setminus\{i,j\}}x_k^{a_k}\right)^{\frac{1}{\beta}}
%% + \zeta_0.
%% \]
%% %
By Lemma \ref{lemma:containment}, if $\beta \le 1$ then $f_u(\x) = z_0 +
\left(\gamma \prod_{k\in N}x_k^{a_k}\right)^{\frac{1}{\beta}}$ is a
minorant of $f(\x)$ for $f(\x)\in [\ell,u]$. Therefore it suffices to
prove that $f''_\ell(\x) \le f_u(\x)$ in $X\cap W_{ij}$.
\[
  \begin{array}{lrlr}
                    & z_0 + \zeta(d_j x_i - d_i x_j)^{\frac{a_i + a_j}{\beta}}\left(\prod_{k\in N\setminus\{i,j\}}x_k^{a_k}\right)^{\frac{1}{\beta}} &\le& z_0 + \left(\gamma \prod_{k\in N}x_k^{a_k}\right)^{\frac{1}{\beta}}\\
    \Leftrightarrow & (\gamma\lambda)^{\frac{1}{\beta}}(d_j x_i - d_i x_j)^{\frac{a_i + a_j}{\beta}}\left(\prod_{k\in N\setminus\{i,j\}}x_k^{a_k}\right)^{\frac{1}{\beta}}          &\le& \left(\gamma \prod_{k\in N}x_k^{a_k}\right)^{\frac{1}{\beta}}\\
    \Leftrightarrow & \lambda^{\frac{1}{\beta}}(d_j x_i - d_i x_j)^{\frac{a_i + a_j}{\beta}}\phantom{\left(\prod_{k\in N\setminus\{i,j\}}x_k^{a_k}\right)^{\frac{1}{\beta}}}          &\le& \left(x_i^{a_i}x_j^{a_j}\right)^{\frac{1}{\beta}}.
  \end{array}
\]
The last inequality is equivalent to $\lambda(d_j x_i - d_i x_j)^{a_i
  + a_j} \le x_i^{a_i}x_j^{a_j}$, which holds in $X\cap W_{ij}$ for
Proposition \ref{prop:lower_betahi}. Although it is applied here to
the two-variable monomial $x_i^{a_i} x_j^{a_j}$ rather than $f(\x)$,
the relationship still holds because $d_i, d_j$ only depend on
$p,q,a_i,a_j$, and $\lambda$ depends on $d_i,d_j,p,q,a_i,a_j$.
\medskip

\noindent {\bf \ref{prop:lower_betalow:envelope}}
For $n=2$, $f''_{\ell}(\x)$ reduces to the linear function $\zeta(d_2
x_1 - d_1 x_2) + z_0$.
By \ref{prop:lower_betalow:matches} and
\ref{prop:lower_betalow:minorant}, $f''_\ell$ matches the concave
function $f(\x)$ at all four points of the set $T = (P_{ij}\cup
Q_{ij}) \cap (C_\ell \cup C_u)$. For linearity of $f''_\ell$, any
function $g$ defined on $\conv(T)$ such that $g(\tilde \x) >
f''_\ell(\tilde \x)$ for some $\tilde \x\in \conv(T)$ is concave
w.r.t.\mbox{} two or more elements of $T$, and hence cannot be the
(convex) lower envelope of $f$ over $\conv(T)$.
%
%% The only remaining points are those in the set $T = \{\x\in X\cap
%% W_{12}: \zeta(d_2 x_1 - d_1 x_2) + \zeta_0 \le \ell \le f(\x) \}$. The
%% set $T$ is convex as both inequalities defining it are convex, and all
%% of its extreme points either satisfy $\zeta(d_2 x_1 - d_1 x_2) +
%% \zeta_0= \ell$ or $f(\x) = \ell$. Therefore a lower envelope of $f$
%% over $T$ is $f'''_\ell(\x) = \ell$.
%
%% Finally, because the components of $f''_\ell$ constitute the lower envelope
%% over the two subsets of the convex hull of $X\cap W_{12}$, we conclude
%% that $f''$ is the lower envelope of $f$ over $X\cap W_{12}$.
\end{proof}

\begin{lemma}
  \label{lemma:conv_hull_xwij}
  For $n=2$, the convex hull of $X \cap W_{12}$ is
  \[
    \begin{array}{ll}
      Y =\!\!& \{\x\in \mathbb R^2_+:\\
         & \phantom{\{} p x_1 \le x_2 \le q x_1,\\
         & \phantom{\{} x_1^{a_1} x_2^{a_2} \ge \ell,\\
         & \phantom{\{} d_2 x_1 - d_1 x_2 \le (u/\lambda)^{\frac{1}{\beta}}\}.
    \end{array}
  \]
\end{lemma}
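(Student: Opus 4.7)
The plan is to establish the two inclusions $\conv(X \cap W_{12}) \subseteq Y$ and $Y \subseteq \conv(X \cap W_{12})$ separately. For the first, I would first note that $Y$ is convex as an intersection of convex sets: the wedge $p x_1 \le x_2 \le q x_1$ and the constraint $d_2 x_1 - d_1 x_2 \le (u/\lambda)^{1/\beta}$ are linear, non-negativity is obvious, and $x_1^{a_1} x_2^{a_2} \ge \ell$ defines a convex set by Lemma \ref{lemma:convhyperb}. It then remains to check $X \cap W_{12} \subseteq Y$. The constraints $\x \ge \mathbf 0$, $p x_1 \le x_2 \le q x_1$, and $x_1^{a_1} x_2^{a_2}\ge \ell$ hold by definition, and the fourth constraint follows from Proposition \ref{prop:lower_betahi}(\ref{prop:lowenv:minor}): in $W_{12}$ we have $\lambda(d_2 x_1 - d_1 x_2)^\beta \le x_1^{a_1} x_2^{a_2} \le u$, so taking $\beta$-th roots (everything is non-negative) yields $d_2 x_1 - d_1 x_2 \le (u/\lambda)^{1/\beta}$.

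For the reverse inclusion, pick $\tilde \x \in Y$ and split into cases on $f(\tilde \x) := \tilde x_1^{a_1} \tilde x_2^{a_2}$. If $f(\tilde \x) \le u$ then the constraints of $Y$ already place $\tilde \x$ in $X \cap W_{12}$. Otherwise $f(\tilde \x) > u$, and I would draw the line through $\tilde \x$ in direction $(d_1, d_2)$. Because $d_2 d_1 - d_1 d_2 = 0$, the quantity $L(\x) := d_2 x_1 - d_1 x_2$ is constant along this line, equal to $c := L(\tilde \x) \le (u/\lambda)^{1/\beta}$. Intersecting the line with $W_{12}$ gives a proper segment whose endpoints $\check\x \in P_{12}$ and $\hat\x \in Q_{12}$ are obtained by solving the linear systems $d_2 x_1 - d_1 x_2 = c,\ x_2 = p x_1$ and the analogous one with $q$; since $d_1 < 0 < d_2$ and $p, q > 0$, both endpoints have strictly positive coordinates. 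By Proposition \ref{prop:lower_betahi}(\ref{prop:lowenv:match}), $f(\check\x) = f(\hat\x) = \lambda c^\beta \le u$.

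Now I would apply the intermediate value theorem: $f$ is continuous along the segment $[\check\x, \hat\x]$, takes value $\lambda c^\beta \le u$ at both endpoints, and value $f(\tilde \x) > u$ at $\tilde\x$. The superlevel set $\{\x \in \Rnp : x_1^{a_1} x_2^{a_2} \ge u\}$ is convex by Lemma \ref{lemma:convhyperb}, so its intersection with the segment is a single subsegment. Hence there exist exactly two points $\x^{(1)}, \x^{(2)}$ on the segment where $f = u$, and $\tilde \x$ lies between them. Both points satisfy $f = u \in [\ell, u]$ and belong to $W_{12}$, so $\x^{(1)}, \x^{(2)} \in X \cap W_{12}$, exhibiting $\tilde \x$ as a convex combination of two points of $X \cap W_{12}$.

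The main obstacle will be handling the boundary case $c = (u/\lambda)^{1/\beta}$: then $\lambda c^\beta = u$, so $\check\x$ and $\hat\x$ themselves already lie on $\{f = u\}$, and the two "crossing points" coincide with them; $\tilde\x$ is then directly a convex combination of $\check\x,\hat\x \in X \cap W_{12}$. I would treat this degenerate case separately in one sentence. A minor check I would also make explicit is that the segment $[\check\x, \hat\x]$ is nondegenerate whenever $c > 0$, which follows from $p < q$ and the signs $d_1 < 0 < d_2$; the case $c = 0$ forces $\tilde\x = \mathbf 0$, which is in $\conv(X \cap W_{12})$ only if $\ell = 0$, and is otherwise excluded from $Y$ by the constraint $x_1^{a_1} x_2^{a_2} \ge \ell$.
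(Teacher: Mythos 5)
Your proof is correct. It shares the paper's skeleton --- the same forward inclusion (convexity of $Y$ plus Proposition~\ref{prop:lower_betahi}(\ref{prop:lowenv:minor}) for the new linear constraint) and the same chord $[\check\x,\hat\x]$ obtained by intersecting the level line of $d_2x_1-d_1x_2$ through $\tilde\x$ with $P_{12}$ and $Q_{12}$ --- but your final step is genuinely different and, in fact, stronger. The paper concludes by asserting that $\check\x$ and $\hat\x$ themselves lie in $X\cap W_{12}$, having verified only $f(\check\x)=f(\hat\x)=\lambda c^\beta\le u$; it never checks $\lambda c^\beta\ge\ell$, and this can fail: with $a_1=a_2=1$, $p=10^{-2}$, $q=10^{2}$, $\ell=1$, $u=1.1$, the point $\tilde\x=(5,5)$ lies in $Y$ and has $f(\tilde\x)=25>u$, yet its chord endpoints have $f=\lambda c^\beta\approx 0.98<\ell$, so they are not in $X$. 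Your intermediate-value step avoids this entirely: you only need $f\le u$ at the endpoints and $f>u$ at $\tilde\x$, and convexity of the superlevel set $\{\x\in\Rnp: f(\x)\ge u\}$ (Lemma~\ref{lemma:convhyperb}) yields two points on the chord with $f=u$ exactly --- unconditionally members of $X\cap W_{12}$ --- that bracket $\tilde\x$. So your route does not merely rephrase the published argument; it closes a real gap in it. The two boundary situations you flag are handled correctly, and the $c=0$ one is indeed vacuous because $\ell>0$ excludes the origin from $Y$.
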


\begin{proof}
  $\lambda(d_2 x_1 - d_1 x_2)^\beta \le f(\x)$ and $f(\x)\le u$
  imply $d_2 x_1 - d_1 x_2 \le (u/\lambda)^{\frac{1}{\beta}}$ and
  consequently $X \cap W_{12} \subseteq Y$. By convexity of $Y$ we
  also have $\conv(X\cap W_{12}) \subseteq Y$. In order to prove
  $\conv(X\cap W_{12}) \supseteq Y$, consider $\tilde \x\in Y$. If
  $\tilde \x\in X\cap W_{12}$, the result holds. Otherwise $f(\tilde
  \x) > u \ge \lambda(d_2 \tilde x_1 - d_1 \tilde
  x_2)^\beta$. Consider two vectors $\check \x\in P_{12}$ and $\hat \x
  \in Q_{12}$ such that $d_2 \check x_1 - d_1 \check x_2 = d_2 \hat
  x_1 - d_1 \hat x_2 = d_2 \tilde x_1 - d_1 \tilde x_2$. Such points
  satisfy the two systems
  \[
  \left\{
    \begin{array}{l}
      d_2 \check x_1 - d_1 \check x_2 = d_2 \tilde x_1 - d_1 \tilde x_2\\
      p \check x_1 - \check x_2 = 0
    \end{array}
  \right.
  \qquad
  \left\{
    \begin{array}{l}
      d_2 \hat   x_1 - d_1 \hat   x_2 = d_2 \tilde x_1 - d_1 \tilde x_2\\
      q \hat x_1 - \hat x_2 = 0,
    \end{array}
  \right.
  \]
both of which always yield a solution as $d_2,p,q > 0$ and $d_1 <
0$. Because $\check \x \in P_{12}$, $\hat \x\in Q_{12}$, and $f'_\ell(\x) =
\lambda(d_2 x_1 - d_1 x_2)^\beta = \lambda(d_2 \tilde x_1 - d_1 \tilde
x_2)^\beta \le u$ for both $\check \x$ and $\hat \x$, these belong to
$X\cap W_{12}$ and $\tilde \x$ is their convex combination since
$\frac{\check x_2}{\check x_1} = p \le \frac{\tilde x_2}{\tilde x_1}
\le q = \frac{\hat x_2}{\hat x_1}$, proving that $\tilde \x \in
\conv(X\cap W_{12})$.
\end{proof}

The last condition defining $Y$, i.e., $d_2 x_1 - d_1 x_2 \le
(u/\lambda)^{\frac{1}{\beta}}$, is equivalent to setting an upper
bound of $u$ on the lower-bounding function $f'_\ell(\x)$ for
$\beta\ge 1$, as it is equivalent to $\lambda (d_2 x_1 - d_1
x_2)^\beta \le u$. It is also equivalent to setting an upper bound of
$u$ on the lower-bounding function $f''_\ell(\x)$ for the case $\beta
\le 1$: $f''_\ell(\x) = \zeta(d_2 x_1 - d_1 x_2) + z_0 \le u$ if
$d_2 x_1 - d_1 x_2 \le \frac{u-z_0}{\zeta}$, but
\[
  \frac{u-z_0}{\zeta} =
  \frac{u-z_0}{\lambda^{\frac{1}{\beta}}\frac{u-\ell}{u^{\frac{1}{\beta}} - \ell^{\frac{1}{\beta}}}} =
  \frac{u^{\frac{1}{\beta}} - \ell^{\frac{1}{\beta}}}{\lambda^{\frac{1}{\beta}}(u-\ell)}
  \left(u - \frac{u^{\frac{1}{\beta}}\ell - \ell^{\frac{1}{\beta}}u}{u^{\frac{1}{\beta}} - \ell^{\frac{1}{\beta}}}\right) =
  \frac{u^{\frac{1}{\beta}}}{\lambda^{\frac{1}{\beta}}}.
\]

Because $f$ is defined over  $\Rnp$, Lemma
\ref{lemma:conv_hull_xwij} defines the projection onto the $\x$-space
of the lower and upper envelopes of $f$ over $X\cap W_{12}$ and
consequently the projection of $\textrm{conv}(F(W_{12}))$. The
argument used in the above proof is worth underlining and will be used
below: a vector $\tilde \x \in Y$ such that $f(\tilde \x) > u \ge
f'_\ell(\tilde \x)$ is a member of $\conv(X\cap W_{12})$.

\def\expAI{1.7}
\def\expAJ{1.5}

\def\pa{0.35} % assume p = -\pa and p_j = 1, same for q
\def\qa{3}

\def\Zlb{0.4}
\def\Zub{10}

% BETA >= 1

\begin{figure}[t]
  \centering
  \subfigure[The upper envelope $f'_u(x_1,x_2) = z_0
    + (\gamma x_1^{a_1}x_2^{a_2})^{\frac{1}{\beta}}$.]{
    \includegraphics[width=.48\textwidth]{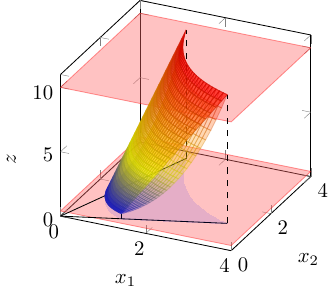}
    \includegraphics[width=.48\textwidth]{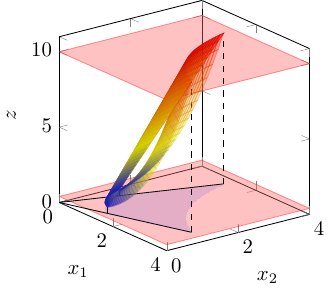}
  }
  \subfigure[The lower envelope $f'_\ell(x_1,x_2) = \lambda (d_2 x_1 - d_1 x_2)^{\beta}$.]{
    \includegraphics[width=.48\textwidth]{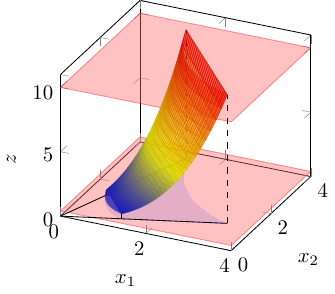}
    \includegraphics[width=.48\textwidth]{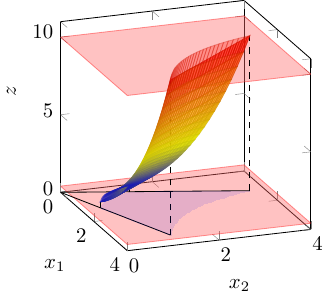}
  }
  \caption{Upper envelope $f'_u(\x)$ (top) and lower envelope
    $f'_\ell(\x)$ of function $f(\x) = x_1^{a_1}x_2^{a_2}$ for $\beta \ge 1$,
    both shown under different angles. The domain $\{(x_1,x_2)\in
    \mathbb R_+^2: p x_1 \le x_2 \le q x_1, f(\x)\in[\ell,u]\}$ is the
    shaded area on the $x_1,x_2$ plane. The parameters are as follows:
    $(p,q) = (\pa,\qa)$, $(\ell, u) = (\Zlb, \Zub)$, $(a_1,
    a_2) = (\expAI, \expAJ)$.}
  \label{fig:3denvelopes}
\end{figure}

We are now ready to provide the {\em upper} envelope of $f$ over $X \cap
W_{ij}$ for $n=2$, which was left uncovered from the previous
section. Figures \ref{fig:3denvelopes} and  \ref{fig:3denvelopesLowBeta} show upper and lower
envelopes for sample values of the parameters. The proof is,
up to Case 1 included, similar to the proof of Proposition
\ref{prop:upper_env} for $n>2$.

\begin{thm}
  \label{thm:upper_env_n2_bh}
  For $n=2$, the upper envelope $E_U(f, X \cap W_{12})$ is
  \[
   \begin{array}{rcl}
     &H =\!\! & \{(\x,z)\in Y\times \mathbb R: \\
     && z \le f_u(\x)\\
     && z \le u\},
   \end{array}
   \]
  where $f_u(\x) = f'_u(\x) = z_0 + \left(\gamma x_1^{a_1}
  x_2^{a_2}\right)^{\frac{1}{\beta}}$ if $\beta \ge 1$ and $f_u(\x) =
  f(\x)$ otherwise.
\end{thm}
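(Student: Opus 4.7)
The strategy is to follow the template of Proposition \ref{prop:upper_env}: prove $E_U(f,X\cap W_{12})\subseteq H$ and $E_U(f,X\cap W_{12})\supseteq H$ separately. The novelty is that the auxiliary-coordinate construction used in Case 2 of that proof cannot survive when $n=2$, so I would replace it by exploiting the description of $\conv(X\cap W_{12})$ from Lemma \ref{lemma:conv_hull_xwij} and the parallel direction $(d_1,d_2)$ from Proposition \ref{prop:lower_betahi}.

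For $E_U(f,X\cap W_{12})\subseteq H$: every $(\x,z)\in\hypg(f,X\cap W_{12})$ satisfies $\x\in X\cap W_{12}\subseteq Y$, $z\le f(\x)\le u$, and $z\le f(\x)\le f_u(\x)$---trivial when $\beta\le 1$ since $f_u=f$, and a consequence of Lemma \ref{lemma:containment} when $\beta\ge 1$---so $\hypg(f,X\cap W_{12})\subseteq H$. Since $Y$ is convex by Lemma \ref{lemma:conv_hull_xwij}, $z\le u$ is linear, and $z\le f_u(\x)$ is a convex constraint ($f_u$ is concave in both regimes: a conic function when $\beta\ge 1$, and $f$ itself which is concave when $\beta\le 1$), $H$ is convex, which together gives $E_U(f,X\cap W_{12})\subseteq H$.

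For the reverse inclusion, pick $(\tilde\x,\tilde z)\in H$ and distinguish three cases. \textbf{Case A}, $\tilde z\le f(\tilde\x)\le u$: then $\tilde\x\in X\cap W_{12}$ and $(\tilde\x,\tilde z)\in\hypg(f,X\cap W_{12})$ directly. \textbf{Case B}, $\tilde z>f(\tilde\x)$: this forces $\beta\ge 1$ (otherwise $\tilde z\le f_u=f$ contradicts $\tilde z>f(\tilde\x)$) and $f(\tilde\x)<\tilde z\le u$. I would then invoke Case 1 of the proof of Proposition \ref{prop:upper_env}, which the author remarks is valid for $n=2$: scale $\tilde\x$ along its ray to obtain $\x',\x''\in W_{12}$ with $f(\x')=\ell$ and $f(\x'')=u$, use that $(\x',\ell),(\x'',u),(\tilde\x,f'_u(\tilde\x))$ share a common ray of $\mathcal K$ from $(\mathbf 0,z_0)$ to place $(\tilde\x,f'_u(\tilde\x))\in E_U(f,X\cap W_{12})$, and express $(\tilde\x,\tilde z)$ as a convex combination of $(\tilde\x,f(\tilde\x))$ and $(\tilde\x,f'_u(\tilde\x))$.

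The crux is \textbf{Case C}, $f(\tilde\x)>u$ (so $\tilde z\le u<f(\tilde\x)$), where the $n>2$ perturbation in a coordinate $x_h$ is unavailable. I would instead exploit the line $\x(t)=\tilde\x+t(d_1,d_2)$ with $(d_1,d_2)$ as in Proposition \ref{prop:lower_betahi}. Along this line $d_2x_1(t)-d_1x_2(t)$ is constant, so $f'_\ell(\x(t))\equiv f'_\ell(\tilde\x)\le u$, the last inequality because $\tilde\x\in Y$. The line exits $W_{12}$ at $t^-<0$ on $P_{12}$ and $t^+>0$ on $Q_{12}$ (strict because $\tilde\x\in P_{12}\cup Q_{12}$ would give $f(\tilde\x)=f'_\ell(\tilde\x)\le u$, contradicting the case hypothesis), and Proposition \ref{prop:lower_betahi}(a) gives $f(\x(t^\pm))=f'_\ell(\tilde\x)\le u$. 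Continuity of $f$ along the line together with $f(\tilde\x)>u$ then yields, via the intermediate value theorem, $t_1\in[t^-,0)$ and $t_2\in(0,t^+]$ with $f(\x(t_1))=f(\x(t_2))=u$. Setting $y_k=\x(t_k)$, both lie in $X\cap W_{12}$, $\tilde\x$ is their convex combination with weight $t_2/(t_2-t_1)\in(0,1)$, and $\tilde z\le u=f(y_k)$ puts $(y_k,\tilde z)\in\hypg(f,X\cap W_{12})$; their convex combination recovers $(\tilde\x,\tilde z)$, proving $(\tilde\x,\tilde z)\in E_U(f,X\cap W_{12})$.
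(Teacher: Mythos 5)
Your proof is correct, and while the containment $E_U(f,X\cap W_{12})\subseteq H$ and your Cases A--B coincide with the paper's argument (the paper likewise defers the sub-case $\tilde z>f(\tilde\x)$ to Case 1 of Proposition \ref{prop:upper_env}, which it notes is valid for $n=2$), your Case C takes a genuinely different route from the paper's Case 2. The paper handles $f(\tilde\x)>u$ by radial scaling: it takes $\check\x=s'\tilde\x$ with $f(\check\x)=u$ and $\hat\x=s''\tilde\x$ with $f'_\ell(\hat\x)=u$, sandwiching $\tilde\x$ on its own ray, and then further decomposes $\hat\x$ (which need not lie in $X$) into the two points of $C_u\cap P_{12}$ and $C_u\cap Q_{12}$, so that $(\tilde\x,\tilde z)$ ends up as a convex combination of \emph{three} hypograph points at height $\tilde z$. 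You instead move along the direction $(d_1,d_2)$, on which $f'_\ell$ is constant and equal to $f'_\ell(\tilde\x)\le u$ by membership in $Y$, use Proposition \ref{prop:lower_betahi}(a) to conclude $f\le u$ at the two exit points on $P_{12}$ and $Q_{12}$, and apply the intermediate value theorem on each side of $\tilde\x$ to produce \emph{two} points of $C_u\cap W_{12}$ whose segment contains $\tilde\x$. Your construction is the natural $n=2$ analogue of the paper's own Case 2 for $n>2$ (where the perturbation direction uses a spare coordinate $x_h$), it avoids the intermediate point $\hat\x$ outside $X$ and the attendant three-point combination, and it transparently covers both regimes of $\beta$ at once; the paper's radial argument, on the other hand, reuses machinery already set up in Proposition \ref{prop:upper} and makes the role of the extra inequality $d_2x_1-d_1x_2\le(u/\lambda)^{1/\beta}$ defining $Y$ visible in the same way. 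Both are complete; yours is arguably the cleaner of the two.
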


\begin{proof}
  Any $(\x,z)\in \hypg(f, X \cap W_{ij})$ satisfies $\x\in Y$ for Lemma
  \ref{lemma:conv_hull_xwij} and $z\le u$ as $\x\in X$. Also, $(\x,z)$ satisfies
  $z \le f_u(\x)$: by construction for $\beta \le 1$, and by
  Lemma \ref{lemma:containment} for $\beta \ge 1$. Hence $\hypg(f,X
  \cap W_{ij}) \subseteq H$.
  Because $H$ is convex, $E_U(f,X \cap W_{ij}) \subseteq H$. We now
  prove that $H \subseteq E_U(f,X \cap W_{ij})$.  Consider $(\tilde
  \x, \tilde z)\in H$. Two cases arise: either $\tilde x_1^{a_1}\tilde
  x_2^{a_2} \le u$ or $\tilde x_1^{a_1}\tilde x_2^{a_2} > u$.

  \subparagraph*{Case 1: $\tilde x_1^{a_1} \tilde x_2^{a_2} \le u$.}
  If $\beta \le 1$ or $\tilde z \le \tilde x_1^{a_1} \tilde
  x_2^{a_2}$, then obviously $(\tilde \x, \tilde z)\in \hypg(f,X\cap
  W_{12})$ and the result holds. Otherwise, see Case 1, proof of
  Proposition \ref{prop:upper_env}.

  \subparagraph*{Case 2: $\tilde x_1^{a_1} \tilde x_2^{a_2} > u$.}
  There exist $\check \x = s'\tilde\x$ such that $f(\check \x) =
  f_u(\check \x) = u$ (again, by construction for $\beta \le 1$ and by
  Lemma \ref{lemma:containment} for $\beta \ge 1$) and $\hat \x =
  s''\tilde \x$ such that $\lambda (d_2 \hat x_1 - d_1 \hat x_2)^\beta
  = f'_\ell (\hat \x) = u \le f(\hat \x)$.
  Because $\tilde \x$ satisfies $\lambda(d_2 \tilde x_1 - d_1 \tilde
  x_2)^\beta \le u$ and $f'_\ell$ is monotone increasing along the
  direction $\{s\tilde \x: s \ge 0\}$, $s''$ is unique and $s''\ge
  1$. Similarly, $s'$ is unique and $s' < 1$. Then
  $\tilde \x$ is a convex combination of $\check \x$ and $\hat \x$.

  Whereas $(\check \x,z) \in \hypg(f,X \cap W_{ij})$ for $z\le u$,
  $\hat \x$ is in turn a convex combination of two points in $X \cap
  W_{ij}$: $\check{\x}$ and $\hat{\x}$ such that $f(\check{\x})=f(\hat{\x})=u$, $x^p_2 = p
  x^p_1$, and $x^q_2 = q x^q_1$. Properties of $f'_\ell$ show that
  $f'_\ell(\check{\x}) = f'_\ell(\hat{\x}) = f'_\ell(\hat \x) = u$, which
  implies $\check{\x}, \hat{\x}, \hat \x$ lie on the same level line of
  $f'_\ell$ and $d_2 x^p_1 - d_1 x^p_2 = d_2 x^q_1 - d_1 x^q_2 = d_2
  \check x_1 - d_1 \check x_2 =
  \left(\frac{u}{\lambda}\right)^{\frac{1}{\beta}}$. Taking into
  account $z$, the above implies implies $(\tilde \x, u)$ is a convex
  combination of $(\check \x, u)$, $(\check{\x}, u)$, and $(\hat{\x}, u)$, all
  elements of $X \cap W_{12}$. But then $(\check \x, \tilde z)$,
  $(\check{\x}, \tilde z)$, and $(\hat{\x}, \tilde z)$ all belong to $\hypg(f,X
  \cap W_{ij})$ by definition of hypograph, and because $\tilde z \le
  u$, $(\tilde \x,\tilde z)$ is their convex combination hence
  $(\tilde \x, \tilde z)\in E_U(f,X \cap W_{ij})$.
\end{proof}

\begin{thm}
  \label{thm:lower_env_n2}
  For $n=2$, the lower envelope $E_L(f, X \cap W_{12})$ is
  \[
    \begin{array}{rcl}
      &H =\!\! & \{(\x, z)\in Y\times \mathbb R:\\
      && z \ge f_\ell(\x),\\%\lambda(d_2 x_1 - d_1 x_2)^\beta,\\
      && z \ge \ell\},
    \end{array}
  \]
where $f_\ell(\x) = f'_\ell(\x)$ if $\beta \ge 1$ and $f_\ell(\x) =
f''_\ell(\x)$ if $\beta \le 1$.
\end{thm}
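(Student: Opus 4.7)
The plan is to prove the two inclusions $E_L(f, X \cap W_{12}) \subseteq H$ and $H \subseteq E_L(f, X \cap W_{12})$. The forward inclusion is the easier half. Note that $H$ is convex: $Y$ is convex by Lemma~\ref{lemma:conv_hull_xwij}, $z \ge \ell$ is affine, and $z \ge f_\ell(\x)$ is convex because $f'_\ell(\x) = \lambda(d_2 x_1 - d_1 x_2)^\beta$ is a nonnegative $\beta$-th power of a linear form with $\beta \ge 1$, while $f''_\ell$ reduces to a linear function as observed after Proposition~\ref{prop:lower_betalow}. In addition $\epig(f, X \cap W_{12}) \subseteq H$ since $X \cap W_{12} \subseteq Y$, $z \ge f(\x) \ge \ell$ for $(\x,z)$ in the epigraph, and $z \ge f(\x) \ge f_\ell(\x)$ by the minorant properties in Propositions~\ref{prop:lower_betahi}(b) and~\ref{prop:lower_betalow}(b). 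Convexity of $H$ then gives $E_L \subseteq H$.

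For the reverse inclusion I take $(\tilde \x, \tilde z) \in H$. Using the identity $(\ell - z_0)/\zeta = (\ell/\lambda)^{1/\beta}$ appearing after Lemma~\ref{lemma:conv_hull_xwij}, either formula for $f_\ell$ makes $f_\ell(\tilde \x) \ge \ell$ equivalent to $d_2 \tilde x_1 - d_1 \tilde x_2 \ge (\ell/\lambda)^{1/\beta}$, so I split along this threshold.

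\textbf{Case 1:} $f_\ell(\tilde \x) \ge \ell$. For $\beta \ge 1$, the line $d_2 x_1 - d_1 x_2 = d_2 \tilde x_1 - d_1 \tilde x_2$ intersects $P_{12}$ and $Q_{12}$ at unique points $\check \x, \hat \x$, with $\tilde \x$ their convex combination by the same construction used in the proof of Proposition~\ref{prop:lower_betahi}(b). Proposition~\ref{prop:lower_betahi}(a) yields $f(\check \x) = f(\hat \x) = f'_\ell(\tilde \x) \in [\ell, u]$ (the upper bound comes from the $Y$-constraint $d_2 x_1 - d_1 x_2 \le (u/\lambda)^{1/\beta}$), so both points belong to $X \cap W_{12}$, and $(\check \x, \tilde z), (\hat \x, \tilde z) \in \epig(f, X \cap W_{12})$ since $\tilde z \ge f'_\ell(\tilde \x)$. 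For $\beta \le 1$, the inequality $\ell \le f''_\ell(\tilde \x) \le u$ combined with $\tilde \x \in W_{12}$ places $\tilde \x$ in the planar quadrilateral whose four vertices $A = P_{12} \cap C_\ell$, $B = Q_{12} \cap C_\ell$, $C = Q_{12} \cap C_u$, $D = P_{12} \cap C_u$ are exactly the points at which $f''_\ell$ matches $f$ by Proposition~\ref{prop:lower_betalow}(a). Writing $\tilde \x = \alpha_A A + \alpha_B B + \alpha_C C + \alpha_D D$ and using linearity of $f''_\ell$, the point $(\tilde \x, f''_\ell(\tilde \x))$ equals the corresponding convex combination of $(A, \ell), (B, \ell), (C, u), (D, u) \in \epig(f, X \cap W_{12})$; to lift to arbitrary $\tilde z \ge f''_\ell(\tilde \x)$, shift the $z$-coordinate of each of these four points by $\tilde z - f''_\ell(\tilde \x) \ge 0$ to obtain epigraph points whose same-weighted convex combination is $(\tilde \x, \tilde z)$.

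\textbf{Case 2:} $f_\ell(\tilde \x) < \ell$, so $\tilde z \ge \ell$. The conditions $f(\tilde \x) \ge \ell$ (from $\tilde \x \in Y$) and $d_2 \tilde x_1 - d_1 \tilde x_2 < (\ell/\lambda)^{1/\beta}$ place $\tilde \x$ in the planar \emph{lens} bounded by the level curve $G = \{f = \ell\} \cap W_{12}$ (running from $A$ to $B$) and the chord $AB$. Since $\{f \ge \ell\}$ is convex by Lemma~\ref{lemma:convhyperb}, $G$ is a convex curve in $\mathbb R^2$ and its planar convex hull coincides with this lens, so one can write $\tilde \x = \sum_i \alpha_i \x_i$ with $\x_i \in G \subset X \cap W_{12}$ and $f(\x_i) = \ell$. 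Then $(\tilde \x, \tilde z) = \sum_i \alpha_i (\x_i, \tilde z)$ lies in $E_L$ because each $(\x_i, \tilde z) \in \epig(f, X \cap W_{12})$ thanks to $\tilde z \ge \ell = f(\x_i)$. The hardest step is this geometric identification of the lens with $\conv(G)$; it requires an explicit convex decomposition of $\tilde \x$ onto $G$, either via Carathéodory's theorem (three curve points suffice in $\mathbb R^2$) or by a direct chord/curve argument analogous to the one used in the proof of Lemma~\ref{lemma:conv_hull_xwij}.
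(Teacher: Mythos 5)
Your proposal is correct and follows essentially the same route as the paper: the same easy forward inclusion via convexity of $H$ and the minorant properties, and the same case split for the reverse inclusion (your threshold $f_\ell(\tilde\x)\ge\ell$ is exactly the paper's dichotomy $\tilde\x\in\conv((C_\ell\cup C_u)\cap(P_{12}\cup Q_{12}))$ versus its complement in $Y$), with the same decompositions — two points on $P_{12},Q_{12}$ for $\beta\ge 1$, the four matching vertices for $\beta\le 1$, and points on the level curve $C_\ell$ for the lens region. The paper likewise resolves the lens case with three points of $C_\ell\cap W_{12}$, matching your Carath\'eodory step.
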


\begin{proof}
Any
$(\tilde \x, \tilde z) \in \textrm{epi}(f,X\cap W_{12})$ satisfies
$\x\in Y$ and $z \ge \ell$ by definition. It also satisfies
$z \ge f'_\ell(\x)$ by Proposition \ref{prop:lower_betahi} and $z \ge f''_\ell(\x)$
by Proposition \ref{prop:lower_betalow}. Hence
$\textrm{epi}(f,X\cap W_{12})\subseteq H$. By convexity of
$H$ we conclude $E_L(f,X\cap W_{12})\subseteq H$.
We prove now $H\subseteq E_L(f, X\cap W_{12})$.

Consider $(\tilde \x, \tilde z)\in H$. If $\tilde \x \in
\textrm{conv}((C_\ell \cup C_u) \cap (P_{12} \cup Q_{12}))$, then for
$\beta \le 1$ the vector $(\tilde \x, f''_\ell(\x))$ is a convex
combination of the four vectors $(\dot \x, f''_\ell(\dot \x)) = (\dot
\x, f(\dot \x))$ for $\dot \x \in (C_\ell \cup C_u) \cap (P_{12} \cup
Q_{12})$, given that $f''_\ell$ is linear (see Proposition
\ref{prop:lower_betalow}). For $\beta \ge 1$, $f'_\ell$ matches $f$
within $P_{12}\cup Q_{12}$ (see Proposition \ref{prop:lower_betahi}),
hence there exist $\hat \x, \check \x$ such that $(\tilde \x,
f'_\ell(\x))$ is a convex combination of $(\hat \x, f'_\ell(\hat \x))$
and $(\check \x, f'_\ell(\check \x))$, both in $\epig(f, X \cap
W_{12})$. In both cases, $z \ge f_\ell(\x)$ and therefore $(\tilde \x,
\tilde z)\in E_L(f, X \cap W_{12})$.

If $\tilde \x \in Y \setminus \textrm{conv}((C_\ell \cup C_u) \cap
(P_{12} \cup Q_{12}))$, then $f(\x) \ge \ell \ge f_\ell(\x)$. A
similar argument to the above leads to three points $\check \x \in
C_\ell \cap P_{12}$, $\hat \x \in C_\ell \cap Q_{12}$ (these two sets
contain a single vector each), and $\dot \x$, such that $f(\dot \x) =
f(\check \x) = f(\hat \x) = \ell$ and that form $(\tilde \x, \ell)$ as
their convex combination, where $\tilde z \ge \ell$, yielding $(\tilde
\x, \tilde z)\in E_L(f, X \cap W_{12})$. This concludes the proof;
note that $Y \setminus \textrm{int}(\textrm{conv}((C_\ell \cup C_u) \cap (P_{12}
\cup Q_{12})))$ is the convex hull of $C_\ell \cap W_{12}$.
\end{proof}

%% \begin{thm}
%%   \label{thm:upper_env_n2_bl}
%%   For $n=2$ and $\beta \le 1$, the upper envelope $E_U(F(W_{12}))$ is
%%   %
%%   \begin{eqnarray}
%%     \nonumber              & H = \{(\x,z)\in Y\times \mathbb R: \\
%%     \label{eqn2pfbl:coneenv} & z \le x_1^{a_1} x_2^{a_2},\\
%%     \label{eqn2pfbl:zub}     & z \le u\}.
%%   \end{eqnarray}
%%   %
%% \end{thm}

%% \begin{proof}
%% \end{proof}

%%%%%%%%%%%%%%%%%%%%%%%%%%%%%%%%%%%%%%%%%%%%%%%%%%%%%%%%%%%%%%%%%%%%%%%%%
%%%%%%%%%%%%%%%%%%%%%%%%%%%%%%%%%%%%%%%%%%%%%%%%%%%%%%%%%%%%%%%%%%%%%%%%%
%%%%%%%%%%%%%%%%%%%%%%%%%%%%%%%%%%%%%%%%%%%%%%%%%%%%%%%%%%%%%%%%%%%%%%%%%
%%%%%%%%%%%%%%%%%%%%%%%%%%%%%%%%%%%%%%%%%%%%%%%%%%%%%%%%%%%%%%%%%%%%%%%%%
%%%%%%%%%%%%%%%%%%%%%%%%%%%%%%%%%%%%%%%%%%%%%%%%%%%%%%%%%%%%%%%%%%%%%%%%%
%%%%%%%%%%%%%%%%%%%%%%%%%%%%%%%%%%%%%%%%%%%%%%%%%%%%%%%%%%%%%%%%%%%%%%%%%

% BETA <= 1

\def\expEI{0.1}
\def\expEJ{0.2}
\def\pe{0.4}
\def\qe{3.3}
\def\ZlbE{0.65}
\def\ZubE{1.21}

\begin{figure}[t]
  \centering
  \includegraphics[width=.48\textwidth]{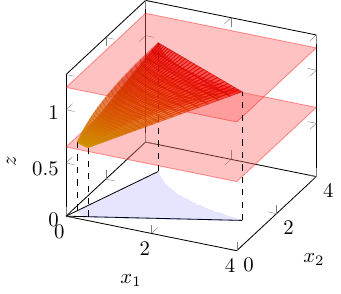}
  \includegraphics[width=.48\textwidth]{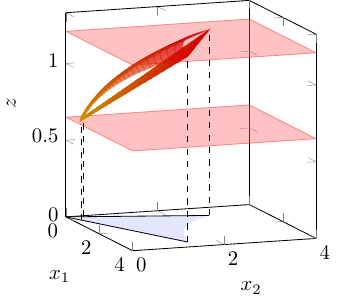}
  \caption{Lower envelope $f''_\ell(\x)$ of function $f(\x) =
    x_1^{a_1}x_2^{a_2}$ shown under different angles. The domain
    $\{(x_1,x_2)\in \mathbb R_+^2: p x_1 \le x_2 \le q x_1,
    f(\x)\in[\ell,u]\}$ is the shaded area on the $x,y$ plane. The
    parameters are as follows: $(p,q) = (\pe,\qe)$, $(\ell, u) =
    (\ZlbE, \ZubE)$, $(a_1, a_2) = (\expEI, \expEJ)$.}
  \label{fig:3denvelopesLowBeta}
\end{figure}

Given a function
$f$ and its lower and upper envelopes $f''_\ell$ and $f''_u$ over a set
$\domn$, the convex hull of $\{(\x,z)\in \mathbb R^{n+1}: z = f(\x)\}$
is given by the intersection of lower and upper envelope
\citep{tawarmalani2013decomposition}. The convex hull of $X \cap
W_{ij}$ for $n=2$ is hence the intersection of the upper and lower
envelope described above.

Unlike the convex hull for $n=2$, the one for $n>2$ has one key
feature: its projection onto the $\x$ variables is unbounded. For this reason the
extra linear inequality $ d_2 x_1 - d_1 x_2 \le
\left(\frac{u}{\lambda}\right)^{\frac{1}{\beta}}$ is only valid for
$n=2$.

\begin{corollary}
\label{cor:convex_hull}
The convex hull of $F(W_{12})$ for $n=2$ and $\beta \ge 1$ is
\[
\begin{array}{rl}
  \conv(F(W_{12})) =\!& \{(\x,z)\in Y \times \mathbb R:\\
                         & \max\{\ell, \lambda (d_2 x_1 - d_1 x_2)^\beta\}
                           \le z \le
                           \min \{u, z_0 + (\gamma x_1^{a_1} x_2^{a_2})^{\frac{1}{\beta}}\}\}.
  \end{array}
\]
The convex hull of $F(W_{12})$ for $n=2$ and $\beta \le 1$ is
\[
\begin{array}{rl}
  \conv(F(W_{12})) =\!& \{(\x,z)\in Y \times \mathbb R:\\
                         & \max\{\ell, \zeta (d_2 x_1 - d_1 x_2) + z_0\}
                           \le z \le
                           \min \{u,  x_1^{a_1} x_2^{a_2}\}\}.
  \end{array}
\]
\end{corollary}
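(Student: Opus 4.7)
The plan is to exploit the principle, cited from Tawarmalani and Sahinidis just before the statement, that the convex hull of the graph of a function over a domain equals the intersection of its upper envelope (convex hull of hypograph) with its lower envelope (convex hull of epigraph). Since Theorems \ref{thm:upper_env_n2_bh} and \ref{thm:lower_env_n2} already give closed-form descriptions of $E_U(f,X\cap W_{12})$ and $E_L(f,X\cap W_{12})$, the result should follow by simple intersection, once the constraint sets are properly lined up.

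First I would write
\[
  \conv(F(W_{12})) \;=\; E_U(f,X\cap W_{12}) \;\cap\; E_L(f,X\cap W_{12}),
\]
justified by the Tawarmalani-Sahinidis result, and observe that both envelopes project onto the same $\x$-set $Y$ from Lemma \ref{lemma:conv_hull_xwij}, so the intersection lives in $Y\times \mathbb R$. Next I would substitute the explicit forms: the upper envelope contributes $z\le u$ together with $z\le f_u(\x)$, and the lower envelope contributes $z\ge \ell$ together with $z\ge f_\ell(\x)$, where $f_u$ and $f_\ell$ depend on whether $\beta\ge 1$ or $\beta\le 1$.

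For the $\beta\ge 1$ case, $f_u(\x)=z_0+(\gamma x_1^{a_1}x_2^{a_2})^{1/\beta}$ from Theorem \ref{thm:upper_env_n2_bh}, and $f_\ell(\x)=f'_\ell(\x)=\lambda(d_2 x_1 - d_1 x_2)^\beta$ from Theorem \ref{thm:lower_env_n2} with Proposition \ref{prop:lower_betahi}. Combining the two pairs of inequalities as $\max$ and $\min$ yields the first displayed formula. For the $\beta\le 1$ case, $f_u(\x)=f(\x)=x_1^{a_1}x_2^{a_2}$ (since for concave $f$ the hypograph is already convex), and $f_\ell(\x)=f''_\ell(\x)=\zeta(d_2 x_1 - d_1 x_2)+z_0$ from Proposition \ref{prop:lower_betalow}; again bundling these produces the second displayed formula.

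The main obstacle, though largely bookkeeping, is making sure the constraint $\x\in Y$ is not doubly counted or lost: both Theorems \ref{thm:upper_env_n2_bh} and \ref{thm:lower_env_n2} already absorb the bound $\lambda(d_2 x_1 - d_1 x_2)^\beta\le u$ (or equivalently its linearized version for $\beta\le 1$, as shown in the discussion following Lemma \ref{lemma:conv_hull_xwij}) into $Y$, so the intersection does not need any extra inequality beyond $\x\in Y$ and the two pairs of envelope bounds. Once this is noted, the corollary is immediate.
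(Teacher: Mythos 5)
Your proposal is correct and follows exactly the route the paper takes: the corollary is obtained by intersecting the upper and lower envelopes from Theorems \ref{thm:upper_env_n2_bh} and \ref{thm:lower_env_n2}, justified by the cited Tawarmalani--Sahinidis result, with both envelopes already projecting onto the common set $Y$. Your remark about not duplicating or losing the constraint $\x\in Y$ matches the paper's own discussion following Lemma \ref{lemma:conv_hull_xwij}.
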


For $\beta \ge
1$, the upper envelope $f'_u$ matches $f$ only on the level sets of
$f$, i.e., $C_\ell \cup C_u = \{\x\in \mathbb R_+^2: f(x)\in
\{\ell,u\}\}$, and the lower envelope $f'_\ell$ matches $f$ on the set
$P \cup Q = \{\x\in \mathbb R_+^2:x_2=p x_1 \vee x_2 = q x_1\}$.
For $\beta \le 1$, the upper envelope $f''_u$ matches $f$ in $(P \cup Q)
\cup (C_\ell \cup C_u)$, while the lower envelope $f''_\ell$ matches $f$ in
four points: those of the set $(P_{12} \cup Q_{12}) \cap (C_\ell \cup C_u)$.

Table \ref{tab:summary} summarizes the results from this and the
previous two sections. The lower envelope for $W_{ij}$ is an
open problem for $n>2$.

\begin{table}[t]
  \centering
  \begin{tabular}{l|c|c}
    \hline
    & $\beta \ge 1$ & $\beta \le 1$ \\
    \hline
    & \multicolumn{2}{|c}{$n\ge 2$}\\
    \hline
    $\conv(F(\Rnp))$
    & $
      \begin{array}{c}
        \x\in\Rnp\\
        (z-z_0)^\beta \le \gamma \prod_{k\in N} x_k^{a_k}\\
        z\in[\ell,u]
      \end{array}
      $
    & $
      \begin{array}{c}\x\in\Rnp\\
        z \le \prod_{k\in N} x_k^{a_k}\\
        z\in[\ell,u]
      \end{array}
      $\\
    \hline
    $E_U(f, X\cap W_{ij})$
    & $
      \begin{array}{c}
        \x\in\Rnp\\
        (z-z_0)^\beta \le \gamma \prod_{k\in N} x_k^{a_k}\\
        z \le u\\
        p x_i \le x_j \le q x_i\\
        \prod_{k\in N} x_k^{a_k} \ge \ell
      \end{array}
      $
    & $
      \begin{array}{c}
        \x\in\Rnp\\
        z \le \prod_{k\in N} x_k^{a_k}\\
        z \le u\\
        p x_i \le x_j \le q x_i\\
        \prod_{k\in N} x_k^{a_k} \ge \ell
      \end{array}
      $\\
    \hline
    & \multicolumn{2}{|c}{$n=2$}\\
    \hline
    $E_L(f, X\cap W_{ij})$
    & $
      \begin{array}{c}
        p x_1 \le x_2 \le q x_1\\
        x_1^{a_1} x_2^{a_2} \ge \ell\\
        d_2 x_1 - d_1 x_2 \le (u/\lambda)^{\frac{1}{\beta}}\\
        z \ge \lambda(d_2 x_1 - d_1 x_2)^\beta\\
        z \ge \ell\\
        %p x_i \le x_j \le q x_i\\
        %\prod_{k\in N} x_k^{a_k} \ge \ell\\
        %d_2 x_1 - d_1 x_2 \ge (u/\lambda)^{\frac{1}{\beta}}
      \end{array}
      $
    &
      $
      \begin{array}{c}
        p x_1 \le x_2 \le q x_1\\
        x_1^{a_1} x_2^{a_2} \ge \ell\\
        d_2 x_1 - d_1 x_2 \le (u/\lambda)^{\frac{1}{\beta}}\\
        z \ge \zeta(d_2 x_1 - d_1 x_2) + z_0\\
        z \ge \ell\\
        %p x_i \le x_j \le q x_i\\
        %\prod_{k\in N} x_k^{a_k} \ge \ell\\
        %d_2 x_1 - d_1 x_2 \ge (u/\lambda)^{\frac{1}{\beta}}
      \end{array}
      $
      \\
    \hline
    $E_U(f, X\cap W_{ij})$
    & $
      \begin{array}{c}
        p x_1 \le x_2 \le q x_1\\
        x_1^{a_1} x_2^{a_2} \ge \ell\\
        d_2 x_1 - d_1 x_2 \le (u/\lambda)^{\frac{1}{\beta}}\\
        (z-z_0)^\beta \le \gamma x_1^{a_1} x_2^{a_2}\\
        z \le u\\
      \end{array}
      $
    & $
      \begin{array}{c}
        p x_1 \le x_2 \le q x_1\\
        x_1^{a_1} x_2^{a_2} \ge \ell\\
        d_2 x_1 - d_1 x_2 \le (u/\lambda)^{\frac{1}{\beta}}\\
        z \le x_1^{a_1} x_2^{a_2}\\
        z \le u\\
      \end{array}
      $\\
    \hline
  \end{tabular}
  \caption{Summary of results on lower and upper envelopes
    of $f$ over $X$ and over $X\cap W_{ij}$.}
  \label{tab:summary}
\end{table}

%%%%%%%%%%%%%%%%%%%%%%%%%%%%%%%%%%%%%%%%%%%%%%%%%%%%%%%%%%%%%%%%%
%%%%%%%%%%%%%%%%%%%%%%%%%%%%%%%%%%%%%%%%%%%%%%%%%%%%%%%%%%%%%%%%%
%%%%%%%%%%%%%%%%%%%%%%%%%%%%%%%%%%%%%%%%%%%%%%%%%%%%%%%%%%%%%%%%%
\section{Volume of the convex hull}
\label{sec:volume}

Computing the volume of the convex hull of
$f(\x)$ within $X \cap W_{12}$ finds a practical application in choosing
branching rules in a MINLO solver.
Branching decisions affect the efficiency of any BB algorithm in a
strong and often unpredictable way. Even for the most standard
branching decision $(x_k\le b')\vee (x_k\ge b'')$, there is no
supporting evidence of any single best branching policy for choosing the
branching variable $x_k$ \citep{achkm2005, benichou.et.al:71} or
the branching point $b',b''$ \citep{belottillmw09}.

The volume of the convex hull of nonlinear operators has been considered for bi- and
tri-linear terms $x_i x_j$ and $x_i x_j x_k$
\citep{lee2018algorithmic,speakman2017quantifying,speakman2017experimental,lee2019gaining},
as a measure for quality of relaxations and for evaluating
branching rules \citep{speakman2018branching}--see also
\citet{anstreicher2021convex}.

Mixed
Integer Linear Optimization (MILO) algorithms only concern
themselves with choosing a branching variable at each node, but this
is not the case for MINLO problems. Once a branching variable $x_k$ is
chosen, {\em balanced branching} attempts to create balanced BB
subtrees by selecting a branching point such that the convex hulls of
the feasible set of
each of the two resulting subproblems have equal volume, in the hope of
creating two balanced BB subtrees  \citep{sah1996,belottillmw09}. An alternative branching criterion
is that the resulting total volume of the convex hulls of the two
subproblems is minimum. Rather than computing the volume of said
convex hulls of the new feasible regions, it is more practical to compute it
for the operators containing the branched-on variable.

We assume $n=2$ from now on.
The feasible region for a two-variable monomial term $z=x_1^{a_1} x_2^{a_2}$
considered here is defined by bounds $z\in [\ell,u]$ on the monomial
itself and by the $p,q$ parameters defining the homogeneous
inequalities $p x_1 \le x_2 \le q x_1$ delimiting $x_1$ and
$x_2$. Branching on $z$ and on $x_1/x_2$ allows for maintaining tight relaxations
by using the convex envelopes described in this article. Therefore one
can consider two branching rules: given $r\in (p,q)$, either branch using $(p
x_1 \le x_2 \le r x_1) \vee (r x_1 \le x_2 \le q x_1)$ or using $(z \le \nu) \vee (z
\ge \nu)$.

Two questions arise: once the monomial term has
been chosen for branching, do we branch on $z$ or on
$\frac{x_2}{x_1}$?  Also, what is the branching point $\nu$ or $r$
that minimizes the total volume in the two subproblems or,
alternatively, finds the most balanced pair of subproblems?

%% A few simplifications are in order. First, $d_2-pd_1 = q^{\varphi_1} -
%% p^{\varphi_1} - p\cdot q^{-\varphi_2} + p\cdot p^{-\varphi_1} =
%% (q-p)q^{-\varphi_2}$. This implies
%% %
%% \[
%% \lambda=\frac{p^{a_2}}{(d_2-pd_1)^{a_1+a_2}} =
%% \frac{p^{a_2}}{(q-p)^{a_1+a_2}q^{-a_2}} = \frac{p^{a_2}q^{a_2}}{(q-p)^{a_1+a_2}}
%% \]
%% %
%% and $\lambda(d_2-pd_1)^{a_1+a_2} =
%% \frac{p^{a_2}q^{a_2}}{(q-p)^{a_1+a_2}} (q-p)^{a_1+a_2}q^{-a_2} =
%% p^{a_2}$.

\begin{figure}[t]

  \def\levelEx{4.4}
  \def\levelExE{1.05}

  \centering

  \subfigure[$(p,q)=(\pa,\qa)$, $(a_1,a_2)=(\expAI,\expAJ)$,
    $(\ell,u)=(\Zlb,\Zub)$, $z = \levelEx$]{
    \includegraphics[width=.46\textwidth]{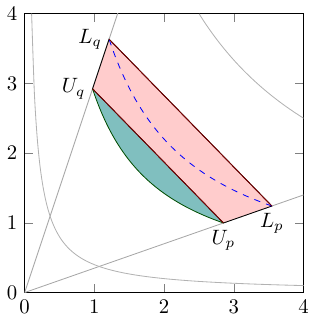}
  }
\quad
  \subfigure[$(p,q)=(\pe,\qe)$, $(a_1,a_2)=(\expEI,\expEJ)$,
    $(\ell,u)=(\ZlbE,\ZubE)$, $z = \levelExE$]{
    \includegraphics[width=.46\textwidth]{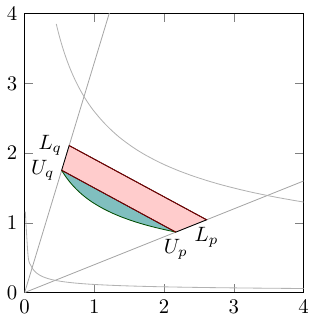}
  }
  \caption{Cross-section (two shades) of the convex envelope of
    $f(\x)=x_1^{a_1}x_2^{a_2}$ for $z\in [\ell,u]$ in the region
    $W_{12}=\{(x_1,x_2)\in \mathbb R_+^2: p x_1 \le x_2 \le q x_1\}$,
    for $\beta \ge 1$ (left) and $\beta \le 1$.
    For $\beta \ge 1$, the level curve of $f(\x)$ is the dashed arc
    between $L_p$ and $L_q$. The arc between $U_p$ and $U_q$ is the level
    curve of the upper envelope $\hat f(\x) = z_0 + \left(\gamma
    x_1^{a_1}x_2^{a_2}\right)^{\frac{1}{\beta}}$ (left) and the level
    set for $f(\x)$ (right). Finally, the segment $L_pL_q$
    is the level curve of the lower envelope $f'_\ell(\x) = \lambda(d_2
    x_1 - d_1 x_2)^{\beta} = z$ (left) and $f''_\ell(\x) = \zeta (d_2
    x_1 - d_1 x_2) + z_0 = z$ (right), for the given value
    of $z$.}
  \label{fig:xsection}
\end{figure}

Given an initial set defined by $p,q,\ell,u,a_1,a_2$, the volume of the
convex envelope denoted as $V(p,q,\ell,u)$, a branching rule on
$x_j/x_i$ using $r$ as a ratio results in a total volume of $V_r =
V(p,r,\ell,u) + V(r,q,\ell,u)$, while a branching rule on $z$ with
branching value $\nu$ results in a total volume of $V_b(\nu) =
V(p,q,\ell,\nu)+ V(p,q,\nu,u)$. If the minimum total volume is sought,
one must select between two minimizers: $r \in
\textrm{argmin}\{V_r(r):r\in [p+\epsilon,q-\epsilon]\}$ and $\nu \in
\textrm{argmin} \{V_b(\nu): \nu \in [\ell+\epsilon, u-
  \epsilon]\}$. Here $\epsilon$ is used to obtain tighter bound
interval on $\nu$ and $r$, which ensures that the feasible regions of both new subproblems are
strict subsets of the subproblem being branched on. Balanced branching requires instead $V(p,r,\ell,u) =
V(r,q,\ell,u)$ and $V(p,q,\ell,\nu) = V(p,q,\nu,u)$
respectively. Therefore, an analytical form of the volume of the
convex hull described in Corollary \ref{cor:convex_hull} would be useful.

Both envelopes of $f$ over $X\cap W_{12}$, i.e., $\max\{\ell,
f_\ell(\x)\}$ (lower) and $\min\{u, f_u(\x)\}$ (upper), are non-smooth
as $f_\ell$ and $f_u$ have non-null gradient in all of $W_{12}$
(except for $f'_\ell$ at $\x=0$). The domain $\domn$ makes it
additionally awkward to consider each envelope separately.

Consider any  $z$ in the range
$[\ell,u]$ of $z$. The volume differential $dV$ at $z$ is the
product of $dz$ and the area of the cross-section of the convex
envelope at $z$. Integrating the volume differential yields
the volume of the convex hull. The main advantage of this method is
that the structure of the cross-section only depends on $z$ and
the original parameters of the problem.

Figure \ref{fig:xsection} illustrates the cross-section as a shaded
region, which we subdivide into two  areas (indicated by different shades) for ease
of computation. The shape of the cross-section is roughly the same for
$\beta \ge 1$ and $\beta \le 1$. The segment $L_pL_q$ is the level
curve of the lower envelope $f'_\ell(\x)$ (for $\beta \ge 1$) and
$f''_\ell(\x)$ (for $\beta \le 1$), while the arc between $U_p$ and
$U_q$ is the level curve of the upper envelope $f_u(\x)$. Together
with the lines $x_2 = p x_1$ and $x_2 = q x_1$ defining $P_{12}$ and $Q_{12}$,
these delimit the cross-section. The dashed arc between $L_p$ and
$L_q$ is the level curve of $f(\x)$ at $z$ for $\beta \ge 1$ (see
Figure \ref{fig:xsection}a) and  is only shown
for completeness; it will not be used in our derivation.

As proven in Section \ref{sec:lower_env}, the extremes $U_p, U_q$ of
the arc also form a segment that is parallel to $L_p L_q$. The area of
the cross-section is hence the sum of two areas: that of the
lightly-shaded trapezoid with vertices $L_p, L_q, U_q, U_p$ and the
dark-shaded area between the arc and the segment both delimited by
$U_p,U_q$.

\subparagraph*{Trapezoid.}
The area of the trapezoid $L_pL_qU_qU_p$ is
$\frac{1}{2}\left(\Delta_{\textsc{u}} + \Delta_{\textsc{l}}\right)\delta$, where $\Delta_{\textsc{u}}$
and $\Delta_{\textsc{l}}$ are the lengths of $U_pU_q$ and $L_pL_q$, respectively,
and $\delta$ is the distance between these two parallel segments. We
need $x_1,x_2$ coordinates of all four points: $L_p=(x_1^{\textsc{l} p},
x_2^{\textsc{l} p})$, $L_q=(x_1^{\textsc{l} q}, x_2^{\textsc{l} q})$, $U_p=(x_1^{\textsc{u} p}, x_2^{\textsc{u} p})$,
$U_q=(x_1^{\textsc{u} q}, x_2^{\textsc{u} q})$.
Then $\Delta_{\textsc{l}}$ and $\Delta_{\textsc{u}}$ are $\|\x^{\textsc{l} p} -
\x^{\textsc{l} q}\|_2$ and $\|\x^{\textsc{u} p} - \x^{\textsc{u} q}\|_2$ respectively.
\[
\begin{array}{rl}
  \Delta_{\textsc{l}} &= \left((x_1^{\textsc{l} p} - x_1^{\textsc{l} q})^2 + (px_1^{\textsc{l} p} - qx_1^{\textsc{l} q})^2)\right)^{\frac{1}{2}}\\
           &= \left((1 + p^2)(x_1^{\textsc{l} p})^2 + (1 + q^2)(x_1^{\textsc{l} q})^2 -2(1 + pq) x_1^{\textsc{l} p} x_1^{\textsc{l} q}\right)^{\frac{1}{2}}\\
           &= x_1^{\textsc{l} p} \left(1 + p^2 + \eta_1^2(1 + q^2) -2 \eta_1(1+pq)\right)^{\frac{1}{2}}\\
           &= x_1^{\textsc{l} p} \left(1 + p^2 + \eta_1^2 + \eta_1^2 q^2 -2 \eta_1 -2 \eta_1 pq\right)^{\frac{1}{2}}\\
           &= x_1^{\textsc{l} p} \left((1 - \eta_1)^2 + p^2  + \eta_2^2 p^2 -2 \eta_2 p^2\right)^{\frac{1}{2}}\\
           &= x_1^{\textsc{l} p} \left((1 - \eta_1)^2 + p^2(1 - \eta_2)^2\right)^{\frac{1}{2}}.
%           &= x_1^{\textsc{l} p} \sqrt{\left(1 - (q/p)^{-\frac{a_2}{\beta}}\right)^2 + p^2\left(1 - (q/p)^{\frac{a_1}{\beta}}\right)^2}.\\
%           &= (p^{-a_2}z)^{\frac{1}{\beta}}\sqrt{\left(1 - (q/p)^{-\frac{a_2}{\beta}}\right)^2 + p^2\left(1 - (q/p)^{\frac{a_1}{\beta}}\right)^2}.\\
\end{array}
\]
Define $\tau = \sqrt{(1 - \eta_1)^2 + p^2(1 -
  \eta_2)^2}$. Then $\Delta_{\textsc{l}} = \tau x_1^{\textsc{l} p}$ and $\Delta_{\textsc{u}} =
  \tau x_1^{\textsc{u} p}$.

In order to compute $\delta$, consider the equations of the line
through $\x^{\textsc{u} p}$ and $\x^{\textsc{u} q}$ and the one through $\x^{\textsc{l} p}$ and
$\x^{\textsc{l} q}$. They have the same slope,
\[
\sigma = \frac{d_2}{d_1} = p\frac{\eta_2 - 1}{\eta_1 - 1} < 0.
\]
Then the two equations are $x_2 = \sigma x_1 +
(x_2^{\textsc{u} p} - \sigma x_1^{\textsc{u} p})$ and $x_2 = \sigma x_1 + (x_2^{\textsc{l} p} -
\sigma x_1^{\textsc{l} p})$. The former equation is used below to compute the
area of the remaining part of the cross-section.
The distance between two lines with equal slope $x_2 =
\sigma x_1 + r'$ and $x_2 = \sigma x_1 + r''$ is $\frac{\lvert r''
  - r'\rvert}{\sqrt{1 + \sigma^2}}$, therefore
\[
%\begin{array}{llll}
\textstyle \delta=\frac{\lvert(x_2^{\textsc{u} p} - \sigma x_1^{\textsc{u} p}) - (x_2^{\textsc{l} p} - \sigma x_1^{\textsc{l} p})\rvert}{\sqrt{1 + \sigma^2}}
      =\frac{\lvert(p -\sigma)x_1^{\textsc{u} p} - (p-\sigma)x_1^{\textsc{l} p}\rvert}{\sqrt{1+\sigma^2}}
      =\frac{p-\sigma}{\sqrt{1 + \sigma^2}}(x_1^{\textsc{l} p}-x_1^{\textsc{u} p}).
%      &=\frac{p-\sigma}{\sqrt{1 + \sigma^2}}\left(\frac{z-z_0}{(\gamma p^{a_2})^{\frac{1}{\beta}}} - \left(\frac{z}{p^{a_2}}\right)^{\frac{1}{\beta}}\right)\\
%      &=\frac{p-\sigma}{\sqrt{1 + \sigma^2}p^{\frac{a_2}{\beta}}}\left(\frac{z-z_0}{\gamma^{\frac{1}{\beta}}} - z^{\frac{1}{\beta}}\right).\\
%\end{array}
\]
Hence the area of the trapezoid is
\[
  A_1(z) = \frac{1}{2} \delta (\Delta_{\textsc{l}} + \Delta_{\textsc{u}}) =
  \frac{\tau(p-\sigma)}{2\sqrt{1 + \sigma^2}} ((x_1^{\textsc{l} p})^2 - (x_1^{\textsc{u} p})^2).
\]
Note that $\sqrt{1 + \sigma^2} = \sqrt{1 + p^2 \left(\frac{\eta_2 -
    1}{\eta_1 - 1}\right)^2} = \frac{1}{1-\eta_1}\sqrt{(\eta_1-1)^2 +
p^2(\eta_2-1)^2}$ and that $p-\sigma = p - p\frac{\eta_2 - 1}{\eta_1 -
  1} = p\frac{\eta_2 - \eta_1}{\eta_1 - 1}$, therefore
\[
\frac{\tau(p-\sigma)}{2\sqrt{1 + \sigma^2}} =
\frac{p}{2}(\eta_2 - \eta_1).
\]
In the remainder of this section, the calculation for the
cross-section area is divided for the two cases $\beta \ge 1$ and
$\beta \le 1$.

\paragraph*{Case 1: $\beta \ge 1$}

The coordinates $(x_1^{\textsc{l} p}, x_2^{\textsc{l} p})$ of
$L_p$ satisfy $\lambda(d_2 x_1^{\textsc{l} p} - d_1 x_2^{\textsc{l} p})^{\beta} =
x_1^{a_1}x_2^{a_2} = z$ (because $f'_\ell$ matches $f$ on
$P\cup Q$), whereas those of $U_p$ satisfy $z_0 + \left( \gamma
(x_1^{\textsc{u} p})^{a_1} (x_2^{\textsc{u} p})^{a_2} \right)^{\frac{1}{\beta}} =
z$; both satisfy $x_2 = p x_1$.
Therefore
\[
\begin{array}{rlll}
%
  %\lambda((d_2  - d_1 p)x_1^{\textsc{l} p})^{a_1 +  a_2} =
  (x^{\textsc{l} p}_1)^{a_1}(px^{\textsc{l} p}_1)^{a_2} = z
   &\Leftrightarrow &
  x_1^{\textsc{l} p} = (p^{-a_2}z)^{\frac{1}{\beta}}\\
  z_0 + \left( \gamma p^{a_2}(x_1^{\textsc{u} p})^{\beta} \right)^{\frac{1}{\beta}} = z
   &\Leftrightarrow &
  x_1^{\textsc{u} p} = \frac{z-z_0}{(\gamma p^{a_2})^{\frac{1}{\beta}}},\\
\end{array}
\]
and similarly for other points we obtain
\begin{equation}
  \label{eq:trapvertices}
  \begin{array}{lllll}
    x_1^{\textsc{l} p} = (p^{-a_2}z)^{\frac{1}{\beta}}, & x_2^{\textsc{l} p} = px_1^{\textsc{l} p}, & \quad &
    x_1^{\textsc{u} p} = \frac{z-z_0}{(\gamma p^{a_2})^{\frac{1}{\beta}}}, & x_2^{\textsc{u} p} = p x_2^{\textsc{u} p},\\
    x_1^{\textsc{l} q} = (q^{-a_2}z)^{\frac{1}{\beta}}, & x_2^{\textsc{l} q} = qx_1^{\textsc{l} q}, & &
    x_1^{\textsc{u} q} = \frac{z-z_0}{(\gamma q^{a_2})^{\frac{1}{\beta}}}, & x_2^{\textsc{u} q} = q x_2^{\textsc{u} q}.\\
  \end{array}
\end{equation}
The area of the trapezoid is hence
$
  A_1(z) = \frac{1}{2}p(\eta_2 - \eta_1)
  \left(z^{\frac{2}{\beta}} - (z - z_0)^2/\gamma^{\frac{2}{\beta}} \right)
$.
%% %
%% and its coefficient can be simplified to
%% %
%% \[
%% \frac{q-p}{2\left(\frac{u-\ell}{u^{1/\beta} - \ell^{1/\beta}}\right)^2
%%            (pq)^{\varphi_2}
%%            \sqrt{(p^{-\varphi_2} - q^{-\varphi_2})^2 + (p^{\varphi_1} - q^{\varphi_1})^2}} > 0
%% \]

The area defined by arc and segment between $U_p$ and $U_q$
is equal to the integral of the difference between two
functions: the linear function through $U_p$ and $U_q$,
$x_2 = \sigma x_1 + (x_2^{\textsc{u} p} - \sigma x_1^{\textsc{u} p})$,
and the function $x_2 = \left((\frac{z - z_0}{\gamma})^\beta
x_1^{-a_1}\right)^{\frac{1}{a_2}}$ which arises from solving $z_0 +
\gamma(x_1^{a_1}x_2^{a_2})^{\frac{1}{\beta}} = z$ for $x_2$.

The difference is $\sigma x_1 + (p - \sigma) x_1^{\textsc{u} p} -
(\frac{z - z_0}{\gamma})^\frac{\beta}{a_2} x_1^{-\frac{a_1}{a_2}}$,
whose primitive is
\[
\left\{
  \begin{array}{ll}
    \frac{1}{2}\sigma x_1^2 +
    (p - \sigma) x_1^{\textsc{u} p} x_1 -
    \frac{a_2}{a_2 - a_1}(\frac{z - z_0}{\gamma})^\frac{\beta}{a_2}
    x_1^{\frac{a_2 - a_1}{a_2}} & \textrm{ if } a_1 \neq a_2\\
    \frac{1}{2}\sigma x_1^2 +
    (p - \sigma) x_1^{\textsc{u} p} x_1 -
    (\frac{z - z_0}{\gamma})^\frac{\beta}{a_2}
    \log x_1 & \textrm{ otherwise},\\
  \end{array}
\right.
\]
which yields, for $a_1 \neq a_2$,
\[
\begin{array}{rcl}
  A_2(z) = &\frac{1}{2}\sigma ((x_1^{\textsc{u} p})^2 - (x_1^{\textsc{u} q})^2) +
  (p-\sigma) (x_1^{\textsc{u} p} - x_1^{\textsc{u} q}) -\\
     & \frac{a_2}{a_2 - a_1}(\frac{z -
    z_0}{\gamma})^\frac{\beta}{a_2}
  ((x_1^{\textsc{u} p})^{\frac{a_2 - a_1}{a_2}} -
   (x_1^{\textsc{u} q})^{\frac{a_2 - a_1}{a_2}}),
\end{array}
\]
and $ A_2(z) = \frac{1}{2}\sigma ((x_1^{\textsc{u} p})^2 - (x_1^{\textsc{u} q})^2) +
(p-\sigma) (x_1^{\textsc{u} p} - x_1^{\textsc{u} q}) - (\frac{z -
  z_0}{\gamma})^\frac{\beta}{a_2} (\log x_1^{\textsc{u} p} - \log x_1^{\textsc{u} q})$
otherwise.

The total area of the cross-section is then a polynomial function with
terms $z^{\frac{2}{\beta}}$, $(z-z_0)^2$, and $(z -
z_0)^\frac{\beta}{a_2}$. For simplicity we write it as $A(z) = b_1
z^{\frac{2}{\beta}} + b_2 (z - z_0)^2 + b_3 (z -
z_0)^{\frac{\beta}{a_2}}$ for opportune values of $b_1, b_2, b_3$
which depend on $a_1, a_2, p, q, \ell, u$. Therefore the volume of the
convex hull for $n=2$ is $\textrm{Vol}(\textrm{conv}(X \cap W_{12})) =
\int_{\ell}^{u} A(z)dz$, i.e.,
\[
\begin{array}{ll}
&\left[
b_1\frac{\beta}{\beta+2}z^{1+\frac{2}{\beta}} + \frac{1}{3} b_2(z-z_0)^3 +\frac{a_2}{a_2 + \beta}b_3(z-z_0)^{1 + \frac{\beta}{a_2}}
\right]_\ell^u\\
=&b_1\frac{\beta}{\beta+2}(u^{1+\frac{2}{\beta}} - \ell^{1+\frac{2}{\beta}}) +
 \frac{1}{3} b_2\left((u-z_0)^3 - (\ell-z_0)^3\right) +\\
 &\frac{a_2}{a_2+\beta}b_3\left((u-z_0)^{1 + \frac{\beta}{a_2}} - (\ell-z_0)^{1 + \frac{\beta}{a_2}}\right),
\end{array}
\]
where $b_3$ is defined differently depending on whether $a_1 = a_2$ or
not.

\paragraph*{Case 2: $\beta \le 1$}

For $L_p$ one must solve the system $x_2^{lp} = p x_1^{lp}, z =
\zeta(d_2 x_1^{lp} - d_1 x_2^{lp}) + z_0$ for $x_1^{lp}$, which yields
$x_1^{lp} = \frac{z - z_0}{(d_2 - d_1 p) \zeta} =
\frac{(z - z_0) \big(u^{\frac{1}{\beta}} - \ell^{\frac{1}{\beta}}\big)}{(u -
  \ell) q^{\frac{a_2}{\beta}}}$. For $U_p$ and $U_q$,
note that $f$ constitutes the upper envelope, hence $(x_1^{\textsc{u} p})^{a_1}
(x_2^{\textsc{u} p})^{a_2} = z$ and $x_2^{\textsc{u} p} = p x_1^{\textsc{u} p}$ yields $x_1^{\textsc{u} p} =
z^{\frac{1}{\beta}}p^{-a_2}$.
Then the area of the trapezoid specializes to
\[
  A_1(z) = \frac{p}{2}(\eta_2 - \eta_1) \left( \left(\frac{(z - z_0)
    \big(u^{\frac{1}{\beta}} - \ell^{\frac{1}{\beta}}\big)}{(u -
  \ell) q^{\frac{a_2}{\beta}}}\right)^2 - (z^{\frac{1}{\beta}}p^{-a_2})^2\right).
\]

Finally, the curve delimiting $X\cap
W_{12}$ from below has equation $x_2 =
z^{\frac{1}{a_2}} x_1^{-\frac{a_1}{a_2}}$, while the line between
$\x^{\textsc{u} q}$ and $\x^{\textsc{u} p}$ is $x_2 = x_2^{\textsc{u} p} + \sigma(x_1 - x_1^{\textsc{u} p}) =
\sigma x_1 + x_1^{\textsc{u} p}(p - \sigma)  =
 \sigma x_1 + (p - \sigma) z^{\frac{1}{\beta}} p^{-a_2}$. The area of
$\textrm{conv}(C_\ell \cap W_{12})$ is then the integral, between
$x_1^{\textsc{u} q}$ and $x_1^{\textsc{u} p}$, of the function $ \sigma x_1 + (p - \sigma)
 z^{\frac{1}{\beta}} p^{-a_2} - z^{\frac{1}{a_2}}
 x_1^{-\frac{a_1}{a_2}}$, whose primitive is similar, in structure, to
 that for the case $\beta \ge 1$, and thus we omit it here. The volume
 of the convex hull for $\beta \le 1$ is therefore an integral of a
 polynomial in $z$ with the same exponents as for $\beta \ge 1$, but with different
 coefficients due to different coordinates of $L_p, L_q, U_p, U_q$.

%%%%%%%%%%%%%%%%%%%%%%%%%%%%%%%%%%%%%%%%%%%%%%%%%%%%%%%%%%%%%%%%%
%%%%%%%%%%%%%%%%%%%%%%%%%%%%%%%%%%%%%%%%%%%%%%%%%%%%%%%%%%%%%%%%%
%%%%%%%%%%%%%%%%%%%%%%%%%%%%%%%%%%%%%%%%%%%%%%%%%%%%%%%%%%%%%%%%%
%\section{Balanced branching rules}
%\label{sec:branching}

%% We assume in this section that a spatial branch-and-bound solver (BB) is run
%% on a MINLO problem with one or more constraints of the form
%% \eqref{eq:poly}, and that for each monomial term $\prod_{i\in
%%   I_j}x_i^{a_i}$ there exist $i,j\in I_j$ and $p,q$ such that $0<p<q$
%% and $p x_i \le x_j \le q x_i$ is satisfied by all feasible vectors
%% $\x$. This is certainly true if $\ell_i,\ell_j>0$ and $u_i,u_j$ are
%% finite. Finally, we assume that the monomial term itself is bounded in
%% the interval $[\ell,u]$ with $0 \le \ell \le u \le +\infty$.

%\subsection{Minimum total volume branching}

%\subsection{Balanced branching}

%%%%%%%%%%%%%%%%%%%%%%%%%%%%%%%%%%%%%%%%%%%%%%%%%%%%%%%%%%%%%%%%%
%%%%%%%%%%%%%%%%%%%%%%%%%%%%%%%%%%%%%%%%%%%%%%%%%%%%%%%%%%%%%%%%%
%%%%%%%%%%%%%%%%%%%%%%%%%%%%%%%%%%%%%%%%%%%%%%%%%%%%%%%%%%%%%%%%%
\section{Concluding remarks and open questions}
\label{sec:conclusions}

%\marginpar{TODO1: $f(x_1,x_2) = x_1^{-a_1}x_2^{-a_2}$ with $a_1,a_2 >
%  0$ is convex; probably admits similar (flipped) envelope as for
%  $\beta \le 1$.}
%\marginpar{TODO2: If $a_1$ or $a_2$ are negative, one can reformulate
%  $x_1^{a_1}$ as $y_1^{-a_1}$ with $x = 1/y$, but would lose the
%  convex hull property as another convexification would be necessary.}
We considered the monomial $f(\x)=\prod_{k\in N}x_k^{a_k}$, with
positive exponents, for $\x \in
\Rnp$. We proved that its upper envelope, for general $n\ge 2$, is a
conic function when the domain of $f$ restrict its value between
$\ell\ge 0$ and $u$. The result holds even when two variables
$x_i,x_j$, apart from being non-negative, are also constrained to form a
linear cone $p x_i \le x_j \le q x_i$ pointed at the origin.

For $n=2$ and when maintaining the linear cone $p x_1 \le x_2 \le q
x_1$, we also find the lower envelope of $f(\x)$ and thus are able to
describe the convex hull of $F(W_{12})$ and its volume.

A seemingly easy extension that we have not considered here is the
case with $n=2, a_1, a_2 < 0$, which makes $f$ convex and perhaps
yields a convex hull similar to that of the case $\beta \le 1$.

As discussed in Section \ref{sec:intro}, these results are applicable
to either problems whose model has constraints such as $p x_i \le x_j
\le q x_i$ or algorithms that enforce such constraints as cutting
planes or branching operations. If such {\em wedge} constraints are
not a natural way to describe a problem or an algorithm, the obvious
tradeoff is between two approaches for MINLO:
\begin{itemize}
  \item the classic approach, where variables have initial lower and
    upper bounds and branching rules are of the form $x_k \le b \vee
    x_k \ge b$, but
    the convex hull of $\{(x_1,x_2)\in \mathbb R_+^2:
    x_1^{a_1}x_2^{a_2}\in [\ell,u], x_1\in [\ell_1, u_1],
    x_2\in [\ell_2, u_2]\}$ is not known and therefore a decomposition
    like the one described in Section \ref{sec:intro} is needed;
  \item an approach where the conic constraint $p x_1 \le x_2 \le q
    x_1$ is enforced and the convex hull is known.
\end{itemize}
Such tradeoff can make one or the other relaxation tighter.
Computational tests are needed to assess the quality of
the convex hull above, especially because the partitioning of $\mathbb
R_+^2$ through linear inequalities is not as standard as variable
bounds. %This is the subject of future research.

\bibliography{minlp}

\begin{thebibliography}{29}
\providecommand{\natexlab}[1]{#1}
\providecommand{\url}[1]{\texttt{#1}}
\expandafter\ifx\csname urlstyle\endcsname\relax
  \providecommand{\doi}[1]{doi: #1}\else
  \providecommand{\doi}{doi: \begingroup \urlstyle{rm}\Url}\fi

\bibitem[Achterberg et~al.(2005)Achterberg, Koch, and Martin]{achkm2005}
T.~Achterberg, T.~Koch, and A.~Martin.
\newblock Branching rules revisited.
\newblock \emph{OR Letters}, 33\penalty0 (1):\penalty0 42--54, 2005.

\bibitem[{Al-Khayyal} and Falk(1983)]{alkhayyal.falk:83}
F.~A. {Al-Khayyal} and J.~E. Falk.
\newblock Jointly constrained biconvex programming.
\newblock \emph{Mathematics of Operations Research}, 8:\penalty0 273--286,
  1983.

\bibitem[Anstreicher et~al.(2021)Anstreicher, Burer, and
  Park]{anstreicher2021convex}
K.~M. Anstreicher, S.~Burer, and K.~Park.
\newblock Convex hull representations for bounded products of variables.
\newblock \emph{Journal of Global Optimization}, 80:\penalty0 757--778, 2021.

\bibitem[Belotti(2009)]{belotticouman09}
P.~Belotti.
\newblock {\sc couenne}: a user's manual.
\newblock Technical report, Lehigh University, 2009.
\newblock URL
  \url{https://github.com/coin-or/Couenne/raw/master/doc/couenne-user-manual.pdf}.

\bibitem[Belotti et~al.(2009)Belotti, Lee, Liberti, Margot, and
  W{\"a}chter]{belottillmw09}
P.~Belotti, J.~Lee, L.~Liberti, F.~Margot, and A.~W{\"a}chter.
\newblock Branching and bounds tightening techniques for non-convex {MINLP}.
\newblock \emph{Optimization Methods \& Software}, 24\penalty0 (4-5):\penalty0
  597--634, 2009.

\bibitem[Belotti et~al.(2010)Belotti, Miller, and Namazifar]{belotti2010805}
P.~Belotti, A.~J. Miller, and M.~Namazifar.
\newblock Valid inequalities and convex hulls for multilinear functions.
\newblock \emph{Electronic Notes in Discrete Mathematics}, 36:\penalty0
  805--812, 2010.
\newblock ISSN 1571-0653.
\newblock \doi{10.1016/j.endm.2010.05.102}.
\newblock ISCO 2010 - International Symposium on Combinatorial Optimization.

\bibitem[B\'enichou et~al.(1971)B\'enichou, Gauthier, Girodet, Hentges,
  Ribi\`ere, and Vincent]{benichou.et.al:71}
M.~B\'enichou, J.~M. Gauthier, P.~Girodet, G.~Hentges, G.~Ribi\`ere, and
  O.~Vincent.
\newblock Experiments in mixed-integer linear programming.
\newblock \emph{Mathematical Programming}, 1:\penalty0 76--94, 1971.

\bibitem[Bestuzheva et~al.(2021)Bestuzheva, Besan{\c{c}}on, Chen, Chmiela,
  Donkiewicz, van Doornmalen, Eifler, Gaul, Gamrath, Gleixner,
  et~al.]{bestuzheva2021scip}
K.~Bestuzheva, M.~Besan{\c{c}}on, W.-K. Chen, A.~Chmiela, T.~Donkiewicz, J.~van
  Doornmalen, L.~Eifler, O.~Gaul, G.~Gamrath, A.~Gleixner, et~al.
\newblock The {SCIP} optimization suite 8.0.
\newblock \emph{arXiv preprint arXiv:2112.08872}, 2021.

\bibitem[Boyd et~al.(2007)Boyd, Kim, Vandenberghe, and
  Hassibi]{boyd2007tutorial}
S.~Boyd, S.-J. Kim, L.~Vandenberghe, and A.~Hassibi.
\newblock A tutorial on geometric programming.
\newblock \emph{Optimization and Engineering}, 8\penalty0 (1):\penalty0
  67--127, 2007.

\bibitem[Ecker(1980)]{ecker1980geometric}
J.~G. Ecker.
\newblock Geometric programming: methods, computations and applications.
\newblock \emph{SIAM review}, 22\penalty0 (3):\penalty0 338--362, 1980.

\bibitem[Fair Isaac()]{ficoxpress}
Fair Isaac.
\newblock The {FICO}-{X}press {O}ptimizer, 2023.
\newblock URL
  \url{https://www.fico.com/fico-xpress-optimization/docs/latest/overview.html}.

\bibitem[Horst and Tuy(1993)]{horst.tuy:93}
R.~Horst and H.~Tuy.
\newblock \emph{Global Optimization}.
\newblock Springer-Verlag, New York, 1993.

\bibitem[Lee et~al.(2018)Lee, Skipper, and Speakman]{lee2018algorithmic}
J.~Lee, D.~Skipper, and E.~Speakman.
\newblock Algorithmic and modeling insights via volumetric comparison of
  polyhedral relaxations.
\newblock \emph{Mathematical Programming}, 170\penalty0 (1):\penalty0 121--140,
  2018.

\bibitem[Lee et~al.(2019)Lee, Skipper, and Speakman]{lee2019gaining}
J.~Lee, D.~Skipper, and E.~Speakman.
\newblock Gaining or losing perspective.
\newblock In \emph{World Congress on Global Optimization}, pages 387--397.
  Springer, 2019.

\bibitem[Lu et~al.(2010)Lu, Li, Gounaris, and Floudas]{lu2010convex}
H.-C. Lu, H.-L. Li, C.~E. Gounaris, and C.~A. Floudas.
\newblock Convex relaxation for solving posynomial programs.
\newblock \emph{Journal of Global Optimization}, 46\penalty0 (1):\penalty0
  147--154, 2010.

\bibitem[McCormick(1976)]{mccormick76}
G.~P. McCormick.
\newblock Computability of global solutions to factorable nonconvex programs:
  Part {I} --- {C}onvex underestimating problems.
\newblock \emph{Mathematical Programming}, 10:\penalty0 146--175, 1976.

\bibitem[Meyer and Floudas(2004)]{meyerfl2004}
C.~A. Meyer and C.~A. Floudas.
\newblock Trilinear monomials with mixed sign domains: Facets of the convex and
  concave envelopes.
\newblock \emph{Journal of Global Optimization}, 29\penalty0 (2), 2004.

\bibitem[Misener and Floudas(2014)]{misener2014antigone}
R.~Misener and C.~A. Floudas.
\newblock Antigone: algorithms for continuous/integer global optimization of
  nonlinear equations.
\newblock \emph{Journal of Global Optimization}, 59\penalty0 (2-3):\penalty0
  503--526, 2014.

\bibitem[Neumeier(1990)]{neu1990}
A.~Neumeier.
\newblock \emph{Interval methods for systems of equations}.
\newblock Cambridge Univ. Press, Cambridge, 1990.

\bibitem[Nguyen et~al.(2018)Nguyen, Richard, and
  Tawarmalani]{nguyen2018deriving}
T.~T. Nguyen, J.-P.~P. Richard, and M.~Tawarmalani.
\newblock Deriving convex hulls through lifting and projection.
\newblock \emph{Mathematical Programming}, 169\penalty0 (2):\penalty0 377--415,
  2018.

\bibitem[Rikun(1997)]{rik1997}
A.~D. Rikun.
\newblock A convex envelope formula for multilinear functions.
\newblock \emph{Journal of Global Optimization}, 10:\penalty0 425--437, 1997.

\bibitem[Sahinidis(1996)]{sah1996}
N.~Sahinidis.
\newblock {\sc Baron}: a general purpose global optimization software package.
\newblock \emph{Journal of Global Optimization}, 8:\penalty0 201--205, 1996.

\bibitem[Smith and Pantelides(1999)]{s_and_costas}
E.~M.~B. Smith and C.~C. Pantelides.
\newblock A symbolic reformulation/spatial branch-and-bound algorithm for the
  global optimisation of nonconvex {MINLPs}.
\newblock \emph{Computers \& Chem.\mbox{} Eng.}, 23:\penalty0 457--478, 1999.

\bibitem[Speakman and Lee(2017)]{speakman2017quantifying}
E.~Speakman and J.~Lee.
\newblock Quantifying double {M}c{C}ormick.
\newblock \emph{Mathematics of Operations Research}, 42\penalty0 (4):\penalty0
  1230--1253, 2017.

\bibitem[Speakman and Lee(2018)]{speakman2018branching}
E.~Speakman and J.~Lee.
\newblock On branching-point selection for trilinear monomials in spatial
  branch-and-bound: the hull relaxation.
\newblock \emph{Journal of Global Optimization}, 72\penalty0 (2):\penalty0
  129--153, 2018.

\bibitem[Speakman et~al.(2017)Speakman, Yu, and Lee]{speakman2017experimental}
E.~Speakman, H.~Yu, and J.~Lee.
\newblock Experimental validation of volume-based comparison for
  double-{M}c{C}ormick relaxations.
\newblock In \emph{International Conference on AI and OR Techniques in
  Constraint Programming for Combinatorial Optimization Problems}, pages
  229--243. Springer, 2017.

\bibitem[Tawarmalani and Richard(2013)]{tawarmalani2013decomposition}
M.~Tawarmalani and J.-P.~P. Richard.
\newblock Decomposition techniques in convexification of inequalities.
\newblock Technical report, Technical report, 2013.

\bibitem[Tawarmalani and Sahinidis(2002)]{tawarmalani.sahinidis:02}
M.~Tawarmalani and N.~V. Sahinidis.
\newblock \emph{Convexification and Global Optimization in Continuous and
  Mixed-Integer Nonlinear Programming: Theory, Algorithms, Software, and
  Applications}.
\newblock Kluwer Academic Publishers, Boston MA, 2002.

\bibitem[Tsai and Lin(2011)]{tsai2011efficient}
J.-F. Tsai and M.-H. Lin.
\newblock An efficient global approach for posynomial geometric programming
  problems.
\newblock \emph{INFORMS Journal on Computing}, 23\penalty0 (3):\penalty0
  483--492, 2011.

\end{thebibliography}
\bibliographystyle{abbrvnat}

\end{document}